\newtheorem{thm}{Theorem}
\newtheorem{lem}{Lemma}
\title{Piecewise linear approximations of the standard normal first order loss function}
\author{Roberto Rossi,${}^{*,1}$ S. Armagan Tarim,${}^2$ Steven Prestwich,${}^3$ Brahim Hnich${}^4$\\
${}^1$Business School, University of Edinburgh, Edinburgh, UK\\
roberto.rossi@ed.ac.uk\\
${}^2$Dept. of Management, Hacettepe University, Ankara, Turkey\\
armagan.tarim@hacettepe.edu.tr\\
${}^3$Dept. of Computer Science, University College Cork, Cork, Ireland\\
s.prestwich@cs.ucc.ie\\
${}^4$Dept. of Computer Engineering, Izmir University of Economics, Izmir, Turkey\\
hnich.brahim@gmail.com
}
\date{}                                           % Activate to display a given date or no date
\begin{document}
\maketitle

\begin{abstract}
The first order loss function and its complementary function are extensively used in practical settings. When the random variable of interest is normally distributed, the first order loss function can be easily expressed in terms of the standard normal cumulative distribution and probability density function. However, the standard normal cumulative distribution does not admit a closed form solution and cannot be easily linearised. Several works in the literature discuss approximations for either the standard normal cumulative distribution or the first order loss function and their inverse. However, a comprehensive study on piecewise linear upper and lower bounds for the first order loss function is still missing. In this work, we initially summarise a number of distribution independent results for the first order loss function and its complementary function. We then extend this discussion by focusing first on random variable featuring a symmetric distribution, and then on normally distributed random variables. For the latter, we develop effective piecewise linear upper and lower bounds that can be immediately embedded in MILP models. These linearisations rely on constant parameters that are independent of the mean and standard deviation of the normal distribution of interest. We finally discuss how to compute optimal linearisation parameters that minimise the maximum approximation error.\\
{\bf keywords: } first order loss function; complementary first order loss function; piecewise linear approximation; minimax; Jensen's; Edmundson-Madansky.\\
{\bf Corresponding author}: Roberto Rossi, University of Edinburgh Business School, EH8 9JS, Edinburgh, United Kingdom.\\ phone: +44(0)131 6515239\\ email: roberto.rossi@ed.ac.uk
\end{abstract}

\newpage

\section{Introduction}

Consider a random variable $\omega$ and a scalar variable $x$. The first order loss function is defined as
\begin{equation}
\mathcal{L}(x,\omega)=\mbox{E}[\max(\omega-x,0)],
\end{equation}
where $\mbox{E}$ denotes the expected value.
The complementary first order loss function is defined as
\begin{equation}
\widehat{\mathcal{L}}(x,\omega)=\mbox{E}[\max(x-\omega,0)].
\end{equation}
The first order loss function and its complementary function play a key role in several application domains. In inventory control \cite{spp98} it is often used to express expected inventory holding or shortage costs, as well as service level measures such as the widely adopted ``fill rate'', also known as $\beta$ service level \cite{Axsater2006}, p. 94. In  finance the first order loss function may be employed to capture risk measures such as the so-called ``conditional value at risk'' (see e.g. \cite{citeulike:1194520}). These examples illustrate possible applications of this function. Of course, the applicability of this function goes beyond inventory theory and finance. 

Despite its importance, to the best of our knowledge a comprehensive analysis of results concerning the first order loss function seems to be missing in the literature. In Section \ref{sec:loss_function}, we first summarise a number of distribution independent results for the first order loss function and its complementary function. We then focus on symmetric distributions and on normal distributions; for these we discuss ad-hoc results in Section \ref{sec:loss_function_normal}.

According to one of these results, the first order loss function can be expressed in terms of the cumulative distribution function of the random variable under scrutiny. Depending on the probability distribution adopted, integrating this function may constitute a challenging task. For instance, if the random variable is normally distributed, no closed formulation exists for its cumulative distribution function. 
Several approximations have been proposed in the literature, see e.g. \cite{zelen64,shore1982,citeulike:12461167,citeulike:12461170,citeulike:12461174,citeulike:12317680}, which can be employed to approximate the first order loss function. However, these approximations are generally nonlinear and cannot be easily embedded in mixed integer linear programming (MILP) models. 

In Section \ref{sec:lb} and \ref{sec:ub}, we introduce piecewise linear lower and upper bounds for the first order loss function and its complementary function for the case of normally distributed random variables. These bounds are based on standard bounding techniques from stochastic programming, i.e. Jensen's lower bound  and Edmundson-Madansky upper bound \cite{citeulike:695971}, p. 167-168. The bounds can be readily used in MILP models and do not require instance dependent tabulations. Our linearisation strategy is based on standard optimal linearisation coefficients computed in such a way as to minimise the maximum approximation error, i.e. according to a minimax approach. Optimal coefficients for approximations comprising from two to eleven segments will be presented in Table \ref{tab:parameters}; these can be reused to approximate the loss function associated with any normally distributed random variable. 

\section{The first order loss function and its complementary function}\label{sec:loss_function}

Consider a continuous random variable $\omega$ with support over $\mathbb{R}$, probability density function $g_\omega(x):\mathbb{R}\rightarrow(0,1)$ and cumulative distribution function $G_\omega(x):\mathbb{R}\rightarrow(0,1)$. The first order loss function can be rewritten as
\begin{equation}
\mathcal{L}(x,\omega)=\int_{-\infty}^{\infty} \max(t-x,0)g_\omega(t)\,dt=\int_{x}^{\infty} (t-x)g_\omega(t)\,dt.
\end{equation}
The complementary first order loss function can be rewritten as
\begin{equation}
\widehat{\mathcal{L}}(x,\omega)=\int_{-\infty}^{\infty} \max(x-t,0)g_\omega(t)\,dt=\int_{-\infty}^{x} (x-t)g_\omega(t)\,dt.
\end{equation}

\begin{lem}\label{thm:relationship_fol_com_2}
The first order loss function $\mathcal{L}(x,\omega)$ can also be expressed as 
\begin{equation}
\mathcal{L}(x,\omega)=\int_{x}^{\infty} \left(1-G_\omega(t)\right)\,dt
\end{equation}
\end{lem}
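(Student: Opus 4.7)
My plan is to start from the integral representation $\mathcal{L}(x,\omega)=\int_{x}^{\infty}(t-x)g_\omega(t)\,dt$ given immediately before the lemma, and transform it into the stated form either via integration by parts or via a Fubini/Tonelli swap. The Fubini route strikes me as the cleanest, since it avoids having to separately argue about a boundary term at infinity.

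The Fubini approach proceeds as follows. First I would rewrite the factor $t-x$, for $t\geq x$, as $\int_{x}^{t}\,ds$. Substituting this gives
\begin{equation*}
\mathcal{L}(x,\omega)=\int_{x}^{\infty}\left(\int_{x}^{t}\,ds\right)g_\omega(t)\,dt.
\end{equation*}
Since the integrand is nonnegative on the region $\{(s,t):x\leq s\leq t\}$, Tonelli's theorem lets me exchange the order of integration to obtain
\begin{equation*}
\mathcal{L}(x,\omega)=\int_{x}^{\infty}\int_{s}^{\infty} g_\omega(t)\,dt\,ds=\int_{x}^{\infty}(1-G_\omega(s))\,ds,
\end{equation*}
which is the claim after renaming $s$ back to $t$.

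As an alternative I would mention the integration-by-parts route: take $u=t-x$ and $dv=g_\omega(t)\,dt$, so that $du=dt$ and a convenient antiderivative is $v=G_\omega(t)-1$ (chosen to vanish at $+\infty$). Then
\begin{equation*}
\mathcal{L}(x,\omega)=\bigl[(t-x)(G_\omega(t)-1)\bigr]_{x}^{\infty}+\int_{x}^{\infty}(1-G_\omega(t))\,dt.
\end{equation*}
The boundary contribution is zero at $t=x$ trivially, and at $t\to\infty$ it vanishes provided $(t-x)(1-G_\omega(t))\to 0$.

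The only real obstacle in either derivation is this integrability/decay condition: one needs the tail mass $1-G_\omega(t)$ to decay fast enough that $(t-x)(1-G_\omega(t))\to 0$, equivalently that $\omega$ has a finite positive-part expectation, which is exactly what makes $\mathcal{L}(x,\omega)$ well-defined in the first place. I would therefore state this assumption explicitly (it is implicit in the setup of the lemma since $\mathcal{L}(x,\omega)$ is assumed finite) and note that under it the Fubini swap is justified and the boundary term in the integration-by-parts version vanishes.
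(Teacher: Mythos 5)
Your proposal is correct, and your primary route is genuinely different from the paper's. The paper splits $\int_{x}^{\infty}(t-x)g_\omega(t)\,dt$ into $\int_{x}^{\infty}t\,g_\omega(t)\,dt - x\int_{x}^{\infty}g_\omega(t)\,dt$ and integrates the first term by parts with $u(t)=t$, $v(t)=-(1-G_\omega(t))$ on $[x,b]$, taking $b\to\infty$ and asserting that the boundary term converges to $x(1-G_\omega(x))$ --- which, as you correctly observe, silently uses $b(1-G_\omega(b))\to 0$, i.e.\ finiteness of $\mbox{E}[\max(\omega,0)]$. Your secondary (integration-by-parts) route is essentially the paper's argument, just without the preliminary splitting, and you are more careful than the paper in flagging the tail-decay hypothesis that makes the boundary term vanish. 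Your primary Fubini/Tonelli route, writing $t-x=\int_{x}^{t}ds$ and swapping the order of integration over the wedge $\{x\leq s\leq t\}$, is cleaner: Tonelli applies unconditionally to nonnegative integrands, so the identity holds with no side conditions (both sides being simultaneously finite or infinite), whereas the paper's argument genuinely needs the integrability assumption to control the limit of the product term. Either version is a valid proof of the lemma.
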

\begin{proof}
\begin{align}
\mathcal{L}(x,\omega)		&=	\int_{x}^{\infty} (t-x)g_\omega(t)\,dt\\
						&=	\int_{x}^{\infty} t g_\omega(t)\,dt-x \int_{x}^{\infty} g_\omega(t)\,dt
\end{align}
the integration of 
\[\int_{x}^{\infty} t g_\omega(t)\,dt\]
is a well-known integration by parts that proceeds as follows. 
%http://math.arizona.edu/~jwatkins/h-expectedvalue.pdf
Let $u(t)=t$, $u'(t)=1$, $v(t)=-(1-G_\omega(t))$, $v'(t)=g_\omega(t)$. Rewrite the integral as
\[\int_{x}^{b} t g_\omega(t)\,dt=\left[-t(1-G_\omega(t))\right]_{x}^{b}+\int_{x}^{b}(1-G_\omega(t))\, dt\]
and take the limit since $b\rightarrow\infty$. The product term in the integration by parts formula converges to $x(1-G_\omega(x))$ as $b\rightarrow\infty$. We therefore take the limit to obtain the identity
\[\int_{x}^{\infty} t g_\omega(t)\,dt=x(1-G_\omega(x))+\int_{x}^{\infty}(1-G_\omega(t))\, dt\]
and by substituting $\int_{x}^{\infty} t g_\omega(t)\,dt$ with this expression we obtain
\begin{align}
\mathcal{L}(x,\omega)				&=	x(1-G_\omega(x))+\int_{x}^{\infty}(1-G_\omega(t))\, dt - x (1-G_\omega(x))\\
								&=	\int_{x}^{\infty}(1-G_\omega(t))\, dt 
\end{align}
\end{proof}

The following well-known lemma is introduced, together with its proof, for completeness.

\begin{lem}\label{thm:compl_fol}
The complementary first order loss function $\widehat{\mathcal{L}}(x,\omega)$ can also be expressed as
\begin{equation}
\widehat{\mathcal{L}}(x,\omega)=\int_{-\infty}^{x} G_\omega(t)\,dt.
\end{equation}
\end{lem}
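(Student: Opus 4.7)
The plan is to mirror the proof of Lemma \ref{thm:relationship_fol_com_2} step by step, with the signs and the direction of the limit reversed. Starting from
\[
\widehat{\mathcal{L}}(x,\omega)=\int_{-\infty}^{x}(x-t)g_\omega(t)\,dt = x\int_{-\infty}^{x} g_\omega(t)\,dt - \int_{-\infty}^{x} t\,g_\omega(t)\,dt = x\,G_\omega(x) - \int_{-\infty}^{x} t\,g_\omega(t)\,dt,
\]
I would focus on rewriting the last integral via integration by parts.

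The key step is the integration by parts applied on a finite interval $[a,x]$ with $u(t)=t$, $u'(t)=1$, $v(t)=G_\omega(t)$, $v'(t)=g_\omega(t)$, giving
\[
\int_{a}^{x} t\,g_\omega(t)\,dt = \bigl[t\,G_\omega(t)\bigr]_{a}^{x} - \int_{a}^{x} G_\omega(t)\,dt = x\,G_\omega(x) - a\,G_\omega(a) - \int_{a}^{x} G_\omega(t)\,dt.
\]
I would then let $a\to-\infty$ and substitute back. The term $x\,G_\omega(x)$ coming out of the integration by parts will cancel exactly against the $x\,G_\omega(x)$ sitting outside the integral, leaving the desired identity $\widehat{\mathcal{L}}(x,\omega)=\int_{-\infty}^{x} G_\omega(t)\,dt$.

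The one technical point—the analogue of the obstacle that Lemma \ref{thm:relationship_fol_com_2} silently handles at $+\infty$—is showing that $a\,G_\omega(a)\to 0$ as $a\to-\infty$. This is symmetric to the assertion there that the product term converges cleanly, and holds whenever $\omega$ has a finite mean (so that $\int_{-\infty}^{x}|t|\,g_\omega(t)\,dt$ is finite, which in particular forces $|a|\,G_\omega(a)\to 0$). I would state this explicitly as the only substantive hypothesis, since every other manipulation is a routine linearity/limit argument parallel to what has already been written down for $\mathcal{L}(x,\omega)$.
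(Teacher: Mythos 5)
Your proposal is correct and follows essentially the same route as the paper: split off $x\,G_\omega(x)$, integrate $\int_{-\infty}^{x}t\,g_\omega(t)\,dt$ by parts, and cancel. The only difference is that you make explicit the vanishing of the boundary term $a\,G_\omega(a)$ as $a\to-\infty$ (via finiteness of the mean), a point the paper's proof passes over silently.
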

\begin{proof}
\begin{align}
\widehat{\mathcal{L}}(x,\omega)		&=	\int_{-\infty}^{x} (x-t)g_\omega(t)\,dt\\
							&=	x \int_{-\infty}^{x} g_\omega(t)\,dt-\int_{-\infty}^{x} t g_\omega(t)\,dt\\											&=	x G_\omega(x) - \int_{-\infty}^{x} t g_\omega(t)\,dt\\	
							&=	x G_\omega(x) - x G_\omega(x) + \int_{-\infty}^{x} G_\omega(t)\,dt\\
							&=	\int_{-\infty}^{x} G_\omega(t)\,dt						
\end{align}
\end{proof}

There is a close relationship between the first order loss function and the complementary first order loss function.

\begin{lem}\label{thm:relationship_fol_com_1}
The first order loss function $\mathcal{L}(x,\omega)$ can also be expressed as 
\begin{equation}
\mathcal{L}(x,\omega)=\widehat{\mathcal{L}}(x,\omega)-(x-\tilde{\omega})
\end{equation}
where $\tilde{\omega}=\mbox{\em E}[\omega]$.
\end{lem}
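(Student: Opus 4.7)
The plan is to avoid the integral representations entirely and work directly from the definitions of $\mathcal{L}$ and $\widehat{\mathcal{L}}$ as expectations. The key observation is the pointwise algebraic identity
\[
\max(\omega - x, 0) - \max(x - \omega, 0) = \omega - x,
\]
which holds for every realisation of $\omega$. This is a simple two-case check: if $\omega \geq x$, the first term equals $\omega - x$ and the second vanishes; if $\omega < x$, the first term vanishes and the second equals $x - \omega$, so their difference is $-(x - \omega) = \omega - x$.

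Given this identity, I would take expectations on both sides. By linearity of expectation (justified by the standing assumption that $\tilde{\omega} = \mbox{E}[\omega]$ exists, which in turn ensures that both $\mathcal{L}(x,\omega)$ and $\widehat{\mathcal{L}}(x,\omega)$ are finite since $\max(\omega - x, 0) \leq |\omega| + |x|$), we obtain
\[
\mathcal{L}(x,\omega) - \widehat{\mathcal{L}}(x,\omega) = \mbox{E}[\omega - x] = \tilde{\omega} - x,
\]
which, rearranged, gives exactly $\mathcal{L}(x,\omega) = \widehat{\mathcal{L}}(x,\omega) - (x - \tilde{\omega})$.

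There is essentially no hard step here — the result is a direct consequence of the algebraic splitting $a = \max(a,0) - \max(-a,0)$ applied to $a = \omega - x$. An alternative route, slightly more laborious, would combine Lemmas \ref{thm:relationship_fol_com_2} and \ref{thm:compl_fol} to write $\mathcal{L}(x,\omega) - \widehat{\mathcal{L}}(x,\omega) = \int_x^\infty (1 - G_\omega(t))\,dt - \int_{-\infty}^x G_\omega(t)\,dt$ and then recognise the right-hand side as $\tilde{\omega} - x$ via the standard tail formula for $\mbox{E}[\omega]$; but this requires additional integrability bookkeeping and is strictly less elegant than the pointwise identity approach. The only genuine obstacle, if one wishes to be pedantic, is confirming the integrability conditions needed to split the expectation, and this is immediate under the hypothesis that $\tilde{\omega}$ is defined.
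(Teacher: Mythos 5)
Your proof is correct, but it takes a genuinely different route from the paper's. The paper works entirely with the density representation: it writes $\mathcal{L}(x,\omega)=\int_x^\infty(t-x)g_\omega(t)\,dt$, splits off $\int_{-\infty}^{\infty}tg_\omega(t)\,dt=\tilde{\omega}$, and then reuses the integration-by-parts identity $\int_{-\infty}^{x}tg_\omega(t)\,dt=xG_\omega(x)-\int_{-\infty}^{x}G_\omega(t)\,dt$ from Lemma \ref{thm:compl_fol} to land on $\int_{-\infty}^{x}G_\omega(t)\,dt-(x-\tilde{\omega})$. Your argument instead rests on the pointwise identity $\max(\omega-x,0)-\max(x-\omega,0)=\omega-x$ and linearity of expectation, which bypasses densities, cumulative distribution functions, and integration by parts entirely. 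What your approach buys is generality and economy: it holds verbatim for any random variable with finite mean --- discrete, continuous, or mixed --- which is relevant since the paper later remarks that its results ``are easily extended to the case in which the random variable is discrete,'' an extension your proof makes automatic. Your attention to integrability (bounding $\max(\omega-x,0)$ by $|\omega|+|x|$ to justify splitting the expectation) is a point the paper glosses over. What the paper's route buys is consistency with the surrounding machinery: its computation exhibits the intermediate form $\mathcal{L}(x,\omega)=\int_{-\infty}^{x}G_\omega(t)\,dt-(x-\tilde{\omega})$ explicitly, which is the representation actually used downstream (e.g.\ in Eq.\ \ref{eq:fol_norm} and in the tangent-line computations of Lemma \ref{lem:tangent}). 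Both proofs are sound; yours is the more elementary and the more general.
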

\begin{proof}
\begin{align}
\mathcal{L}(x,\omega)		&=	\int_{x}^{\infty} (t-x)g_\omega(t)\,dt\\
						&=	\int_{x}^{\infty} t g_\omega(t)\,dt-x \int_{x}^{\infty} g_\omega(t)\,dt\\											&=	\int_{-\infty}^{\infty} t g_\omega(t)\,dt-\int_{-\infty}^{x} t g_\omega(t)\,dt-x \int_{x}^{\infty} g_\omega(t)\,dt\\
						&= 	\int_{-\infty}^{\infty} t g_\omega(t)\,dt-\int_{-\infty}^{x} t g_\omega(t)\,dt-x (1-G_\omega(t))\\
						&= 	\int_{-\infty}^{\infty} t g_\omega(t)\,dt- x G_\omega(x) + \int_{-\infty}^{x} G_\omega(t)\,dt-x (1-G_\omega(t))\\
						&=	\int_{-\infty}^{x} G_\omega(t)\,dt-(x-\tilde{\omega})\\
						&=	\widehat{\mathcal{L}}(x,\omega)-(x-\tilde{\omega})					
%\widehat{\mathcal{L}}(x,\omega)-\mathcal{L}(x,\omega)	&=&	\int_{-\infty}^{x} (x-t)g_\omega(t)\,dt-\int_{x}^{\infty} (t-x)g_\omega(t)\,dt\\
%									&=& \int_{-\infty}^{x} (x-t)g_\omega(t)\,dt+\int_{x}^{\infty} (x-t)g_\omega(t)\,dt\\
%									&=& \int_{-\infty}^{\infty} (x-t)g_\omega(t)\,dt\\		
%									&=& x-\int_{-\infty}^{\infty} t g_\omega(t)\,dt\\																	&=& x-\tilde{\omega}\\			
\end{align}
\end{proof}

Because of the relation discussed in Lemma \ref{thm:relationship_fol_com_1}, in what follows without loss of generality most of the results will be presented for the complementary first order loss function.

Another known result for the first order loss function and its complementary function is their convexity, which we present next.

\begin{lem}\label{lem:convexity_loss_function}
$\mathcal{L}(x,\omega)$ and $\widehat{\mathcal{L}}(x,\omega)$ are convex in $x$.
\end{lem}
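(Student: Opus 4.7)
The plan is to give a short argument based on the structural form of the integrand, which works for both functions simultaneously and is distribution-free. For any fixed realisation $t$ of $\omega$, the map $x\mapsto \max(t-x,0)$ is the pointwise maximum of the two affine functions $x\mapsto t-x$ and $x\mapsto 0$, and is therefore convex in $x$; symmetrically, $x\mapsto \max(x-t,0)$ is convex. Since $\mathcal{L}$ and $\widehat{\mathcal{L}}$ are expectations of these pointwise convex functions, and expectation is a non-negative linear operator, convexity is preserved.

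Concretely, I would fix $\lambda\in[0,1]$ and $x_1,x_2\in\mathbb{R}$ and observe the pointwise inequality
$$
\max\!\bigl(t-(\lambda x_1+(1-\lambda)x_2),\,0\bigr)\;\leq\;\lambda\max(t-x_1,0)+(1-\lambda)\max(t-x_2,0),
$$
which holds for every $t$. Integrating both sides against the density $g_\omega(t)$ yields convexity of $\mathcal{L}(\cdot,\omega)$; replacing $t-x_i$ by $x_i-t$ in the same inequality and integrating gives convexity of $\widehat{\mathcal{L}}(\cdot,\omega)$.

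An alternative route, which I would only mention as a sanity check, uses the integral representations of Lemmas \ref{thm:relationship_fol_com_2} and \ref{thm:compl_fol}. Since $\omega$ admits a density, $G_\omega$ is continuous, so the fundamental theorem of calculus gives
$$
\frac{d}{dx}\widehat{\mathcal{L}}(x,\omega)=G_\omega(x),\qquad \frac{d}{dx}\mathcal{L}(x,\omega)=G_\omega(x)-1,
$$
and both derivatives are non-decreasing in $x$ because $G_\omega$ is a cumulative distribution function; this is equivalent to convexity for differentiable functions.

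There is no real obstacle: the only subtlety worth a sentence is the justification of the step in which expectation is applied to the convex-combination inequality (the first approach) or of differentiation under the integral sign (the second). Both are immediate from monotonicity of the Lebesgue integral and the continuity of $G_\omega$ respectively. I would present the first argument in the paper because it is cleaner, does not require $\omega$ to be absolutely continuous, and makes the convexity-preserving role of expectation transparent.
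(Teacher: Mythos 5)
Your proof is correct, but your primary argument is not the one the paper uses. The paper proves convexity of $\widehat{\mathcal{L}}(x,\omega)$ by differentiating the integral representation of Lemma \ref{thm:compl_fol} twice, obtaining $\frac{d^2}{dx^2}\widehat{\mathcal{L}}(x,\omega)=g_\omega(x)\geq 0$, and then transfers the result to $\mathcal{L}(x,\omega)$ via the identity $\mathcal{L}(x,\omega)=\widehat{\mathcal{L}}(x,\omega)-(x-\tilde{\omega})$ of Lemma \ref{thm:relationship_fol_com_1} together with convexity (indeed affinity) of $-x$ --- essentially your ``sanity check'' route, except that the paper handles $\mathcal{L}$ by translation rather than by computing its derivative directly. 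Your main argument --- that $x\mapsto\max(t-x,0)$ is a pointwise maximum of affine functions, hence convex for each fixed $t$, and that taking expectations preserves the convex-combination inequality by monotonicity and linearity of the integral --- is a genuinely different and in fact stronger proof: it requires no density, no differentiability, and no prior integral representation, so it covers discrete and mixed distributions for free (the paper only remarks informally that its results ``are easily extended'' to the discrete case). What the paper's approach buys in exchange is that the explicit first derivative $G_\omega(x)$ is reused later (in Lemma \ref{lem:tangent} and Lemma \ref{lem:max_error_ub}) to identify tangent lines and error maxima, so deriving it here is not wasted work. Either proof is acceptable; yours is cleaner and more general as a standalone statement.
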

\begin{proof}
We shall prove the result for $\widehat{\mathcal{L}}(x,\omega)$. Recall that $\widehat{\mathcal{L}}(x,\omega)=\int_{-\infty}^{x} G_\omega(t)\,dt$. From the fundamental theorem of integral calculus
\[\frac{d}{dx}\widehat{\mathcal{L}}(x,\omega)=G_\omega(x)\]
and
\[\frac{d^2}{dx^2}\widehat{\mathcal{L}}(x,\omega)=g_\omega(x).\]
Since $g_\omega(x)$ is nonnegative the result follows immediately; furthermore, the proof for $\mathcal{L}(x,\omega)$ follows from Lemma \ref{thm:relationship_fol_com_1} and from the fact that $-x$ is convex.
\end{proof}

For a random variable $\omega$ with symmetric probability density function, we introduce the following results.

\begin{lem}\label{thm:fol_symm}
If the probability density function of $\omega$ is symmetric about a mean value $\tilde{\omega}$, then \[\mathcal{L}(x,\omega)=\widehat{\mathcal{L}}(2\tilde{\omega}-x,\omega).\]
\end{lem}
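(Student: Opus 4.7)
The plan is to rewrite $\mathcal{L}(x,\omega)$ via Lemma \ref{thm:relationship_fol_com_2} in its ``tail integral'' form, perform a reflection change of variable centred at $\tilde{\omega}$, and then use the symmetry of $g_\omega$ to convert the integrand into $G_\omega$, at which point Lemma \ref{thm:compl_fol} identifies the result as $\widehat{\mathcal{L}}(2\tilde{\omega}-x,\omega)$.

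Concretely, I would first record the consequence of symmetry that I actually need. The hypothesis $g_\omega(\tilde{\omega}+u)=g_\omega(\tilde{\omega}-u)$ integrates to $G_\omega(\tilde{\omega}+u)=1-G_\omega(\tilde{\omega}-u)$ for all $u\in\mathbb{R}$; equivalently, writing $s=\tilde{\omega}-u$, one obtains the clean identity $1-G_\omega(2\tilde{\omega}-s)=G_\omega(s)$. This is the only probabilistic input of the proof.

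Next I would start from Lemma \ref{thm:relationship_fol_com_2}, so that
\[
\mathcal{L}(x,\omega)=\int_{x}^{\infty}\bigl(1-G_\omega(t)\bigr)\,dt,
\]
and substitute $t=2\tilde{\omega}-s$, $dt=-ds$. The limits swap from $(x,\infty)$ to $(2\tilde{\omega}-x,-\infty)$, and reabsorbing the minus sign gives
\[
\mathcal{L}(x,\omega)=\int_{-\infty}^{2\tilde{\omega}-x}\bigl(1-G_\omega(2\tilde{\omega}-s)\bigr)\,ds.
\]
Applying the symmetry identity to the integrand turns this into $\int_{-\infty}^{2\tilde{\omega}-x}G_\omega(s)\,ds$, which by Lemma \ref{thm:compl_fol} is exactly $\widehat{\mathcal{L}}(2\tilde{\omega}-x,\omega)$.

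There is no real obstacle here; the only thing to be careful about is the bookkeeping of the limits under the orientation-reversing substitution and correctly identifying $1-G_\omega(2\tilde{\omega}-s)$ with $G_\omega(s)$ (rather than with $G_\omega(-s)$ or similar). As a sanity check I would verify the statement at $x=\tilde{\omega}$, where symmetry forces $\mathcal{L}(\tilde{\omega},\omega)=\widehat{\mathcal{L}}(\tilde{\omega},\omega)$ (consistent with Lemma \ref{thm:relationship_fol_com_1} since $x-\tilde{\omega}=0$), matching the identity with $2\tilde{\omega}-x=\tilde{\omega}$. An equivalent, and arguably even shorter, route would be to start from the expectation definition $\mathcal{L}(x,\omega)=\operatorname{E}[\max(\omega-x,0)]$ and use that $\omega-\tilde{\omega}\stackrel{d}{=}\tilde{\omega}-\omega$ to rewrite it as $\operatorname{E}[\max((2\tilde{\omega}-x)-\omega,0)]=\widehat{\mathcal{L}}(2\tilde{\omega}-x,\omega)$; however, the integral-based derivation is more in the spirit of the preceding lemmas in the paper.
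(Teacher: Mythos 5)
Your proof is correct and follows the same route as the paper's: start from Lemma \ref{thm:relationship_fol_com_2}, reflect the integration variable about $\tilde{\omega}$, use the symmetry identity $1-G_\omega(2\tilde{\omega}-s)=G_\omega(s)$, and conclude via Lemma \ref{thm:compl_fol}. The paper's version is just a terser rendering of the same argument (it jumps directly from $\int_{x}^{\infty}(1-G_\omega(t))\,dt$ to $\int_{-\infty}^{2\tilde{\omega}-x}G_\omega(t)\,dt$ without displaying the substitution), so your write-up simply supplies the intermediate steps it omits.
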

\begin{proof}
\begin{align}
\mathcal{L}(x,\omega)		&=	\int_{x}^{\infty} \left(1-G_\omega(t)\right)\,dt	\\
						&=	\int_{-\infty}^{\tilde{\omega}-(x-\tilde{\omega})} G_\omega(t)\,dt	\\
						&=	\widehat{\mathcal{L}}(2\tilde{\omega}-x,\omega)		
\end{align}
\end{proof}

\begin{lem}\label{thm:fol_automorphism}
If the probability density function of $\omega$ is symmetric about a mean value $\tilde{\omega}$, then
\[\widehat{\mathcal{L}}(x,\omega)=\widehat{\mathcal{L}}(2\tilde{\omega}-x,\omega)+(x-\tilde{\omega})\] and
\[\mathcal{L}(x,\omega)=\mathcal{L}(2\tilde{\omega}-x,\omega)-(x-\tilde{\omega}).\]
\end{lem}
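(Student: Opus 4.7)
The plan is to derive both identities by combining two earlier results: Lemma \ref{thm:relationship_fol_com_1}, which states $\mathcal{L}(x,\omega)=\widehat{\mathcal{L}}(x,\omega)-(x-\tilde{\omega})$, and Lemma \ref{thm:fol_symm}, which states $\mathcal{L}(x,\omega)=\widehat{\mathcal{L}}(2\tilde{\omega}-x,\omega)$ in the symmetric case. Both identities are algebraic consequences of these, so no new integration is required; the proof will be essentially a substitution exercise.

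For the first identity, I would start from Lemma \ref{thm:fol_symm} in the form $\widehat{\mathcal{L}}(2\tilde{\omega}-x,\omega)=\mathcal{L}(x,\omega)$, and then apply Lemma \ref{thm:relationship_fol_com_1} to the right-hand side to rewrite it as $\widehat{\mathcal{L}}(x,\omega)-(x-\tilde{\omega})$. Rearranging gives
\[\widehat{\mathcal{L}}(x,\omega)=\widehat{\mathcal{L}}(2\tilde{\omega}-x,\omega)+(x-\tilde{\omega}),\]
which is the first claim.

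For the second identity, I would apply Lemma \ref{thm:relationship_fol_com_1} with $x$ replaced by $2\tilde{\omega}-x$, noting that $(2\tilde{\omega}-x)-\tilde{\omega}=-(x-\tilde{\omega})$, to get $\mathcal{L}(2\tilde{\omega}-x,\omega)=\widehat{\mathcal{L}}(2\tilde{\omega}-x,\omega)+(x-\tilde{\omega})$. Combining this with Lemma \ref{thm:relationship_fol_com_1} for $\mathcal{L}(x,\omega)$ and using the first identity (just proved) to eliminate $\widehat{\mathcal{L}}(x,\omega)-\widehat{\mathcal{L}}(2\tilde{\omega}-x,\omega)=x-\tilde{\omega}$, a short computation yields $\mathcal{L}(x,\omega)-\mathcal{L}(2\tilde{\omega}-x,\omega)=-(x-\tilde{\omega})$, which is the second claim.

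The only real subtlety to watch is sign bookkeeping when substituting $2\tilde{\omega}-x$ into the term $(x-\tilde{\omega})$ that appears in Lemma \ref{thm:relationship_fol_com_1}; beyond that the argument is essentially a one-line rearrangement of previously established identities, so no genuine obstacle is expected.
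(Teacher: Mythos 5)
Your proposal is correct and follows exactly the paper's route: the paper's proof is the one-liner ``Follows immediately from Lemma \ref{thm:relationship_fol_com_1} and Lemma \ref{thm:fol_symm},'' and you have simply spelled out the substitution and sign bookkeeping, which checks out in both identities.
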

\begin{proof}
Follows immediately from Lemma \ref{thm:relationship_fol_com_1} and Lemma \ref{thm:fol_symm}.
\end{proof}

The results presented so far are easily extended to the case in which the random variable is discrete. In the following section we present results for the case in which the random variable is normally distributed.

\section{The first order loss function for a normally distributed random variable}\label{sec:loss_function_normal}

Let $\zeta$ be a normally distributed random variable with mean $\mu$ and standard deviation $\sigma$. 
Recall that the Normal probability density function is defined as
\begin{equation}
g_\zeta(x)=\frac{1}{\sigma\sqrt{2\pi}}e^{-\frac{(x-\mu)^2}{2\sigma^2}}.
\end{equation}
No closed form expression exists for the cumulative distribution function \[G_\zeta(x)=\int_{-\infty}^x g_\zeta(x)\,dx.\]
Let $\phi(x)$ be the standard Normal probability density function
%\begin{equation}
%\phi(x)=\frac{1}{\sqrt{2\pi}}e^{-\frac{x^2}{2}}
%\end{equation}
and $\Phi(x)$ the respective cumulative distribution function.
%\begin{equation}
%\Phi(x)=\int_{-\infty}^{x}\phi(t)\,dt.
%\end{equation}

We next present three known results for a normally distributed random variable: a standardisation result  in Lemma \ref{lem:folf_std_norm}, and two closed form expressions for the computation of the loss function and of its complementary function in Lemmas \ref{thm:compl_fol_closed} and \ref{thm:compl_fol_closed_2}.
\begin{lem}\label{lem:folf_std_norm}
The complementary first order loss function of $\zeta$ can be expressed in terms of the standard Normal cumulative distribution function as
\begin{equation}\label{eq:compl_fol_norm}
\widehat{\mathcal{L}}(x,\zeta)=\sigma\int_{-\infty}^{\frac{x-\mu}{\sigma}} \Phi(t)\,dt=\sigma\widehat{\mathcal{L}}\left(\frac{x-\mu}{\sigma},Z\right),
\end{equation}
where $Z$ is a standard Normal random variable.
\end{lem}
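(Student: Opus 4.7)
The plan is to apply Lemma \ref{thm:compl_fol} directly to the random variable $\zeta$, then perform the standardisation change of variables $u=(t-\mu)/\sigma$ inside the resulting integral. Concretely, I would first write
\[
\widehat{\mathcal{L}}(x,\zeta)=\int_{-\infty}^{x} G_\zeta(t)\,dt,
\]
which is immediate from Lemma \ref{thm:compl_fol}. Next I would use the well-known identity $G_\zeta(t)=\Phi\!\left(\frac{t-\mu}{\sigma}\right)$ that relates the cumulative distribution of a general normal to that of the standard normal.

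Having rewritten the integrand in terms of $\Phi$, I would apply the substitution $u=(t-\mu)/\sigma$, so that $dt=\sigma\,du$, the lower limit $t=-\infty$ still maps to $u=-\infty$, and the upper limit $t=x$ maps to $u=(x-\mu)/\sigma$. Pulling the constant factor $\sigma$ out of the integral then yields
\[
\widehat{\mathcal{L}}(x,\zeta)=\sigma\int_{-\infty}^{\frac{x-\mu}{\sigma}}\Phi(u)\,du,
\]
which is the first equality in \eqref{eq:compl_fol_norm}.

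Finally, to obtain the second equality I would invoke Lemma \ref{thm:compl_fol} once more, this time applied to the standard normal random variable $Z$, for which $G_Z=\Phi$: this immediately gives $\widehat{\mathcal{L}}\!\left(\frac{x-\mu}{\sigma},Z\right)=\int_{-\infty}^{(x-\mu)/\sigma}\Phi(u)\,du$, and substituting this expression back completes the proof. There is no real obstacle here; the only point that requires minimal care is the change-of-variables bookkeeping (limits and the Jacobian factor $\sigma$), which must be tracked correctly so that the factor of $\sigma$ ends up outside the integral rather than inside the argument of $\Phi$.
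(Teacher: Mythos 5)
Your proof is correct, but it takes a slightly different route from the paper's. You start from the cumulative-distribution representation $\widehat{\mathcal{L}}(x,\zeta)=\int_{-\infty}^{x}G_\zeta(t)\,dt$ of Lemma \ref{thm:compl_fol}, use the identity $G_\zeta(t)=\Phi\bigl(\tfrac{t-\mu}{\sigma}\bigr)$, and substitute $u=(t-\mu)/\sigma$; the integrand then transforms trivially and only the Jacobian $\sigma$ appears, after which a second appeal to Lemma \ref{thm:compl_fol} (applied to $Z$) gives the scaling identity $\sigma\widehat{\mathcal{L}}\bigl(\tfrac{x-\mu}{\sigma},Z\bigr)$. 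The paper instead performs the same substitution directly in the density-form definition $\int_{-\infty}^{x}(x-t)g_\zeta(t)\,dt$, which requires the extra bookkeeping of rewriting $x-\sigma t-\mu$ as $\sigma\bigl(\tfrac{x-\mu}{\sigma}-t\bigr)$ and using $g_\zeta(\sigma t+\mu)=\phi(t)/\sigma$, but in return exhibits the identity $\widehat{\mathcal{L}}(x,\zeta)=\sigma\widehat{\mathcal{L}}\bigl(\tfrac{x-\mu}{\sigma},Z\bigr)$ straight from the definition without invoking Lemma \ref{thm:compl_fol} at all. Your version is arguably cleaner on the change-of-variables side; the paper's is more self-contained at the level of the raw definition. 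Both are complete and correct.
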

\begin{proof}
%http://www.statlect.com/subon2/dstfun1.htm#densinc
%http://www.statlect.com/ucdnrm1.htm
%We use the following results from basic probability theory. Let $\omega$ be an absolutely continuous random variable with support $R_\omega$ and probability density function $g_\omega(x)$. Let $f:\mathbb{R}\rightarrow\mathbb{R}$ be strictly increasing and differentiable $R_\omega$, then $\rho=f(\omega)$ is absolutely continuous and its probability density function can be easily computed as follows. The support of $\rho$ is $R_\rho=\{y=f(x):x\in R_\omega\}$ and its probability density function is 
%\[g_\rho(y)=\left\{
%\begin{array}{ll}
%g_\omega(f^{-1}(y))\frac{df^{-1}(y)}{dy}&\mathrm{if }y\in R_{\rho}\\
%0&\mathrm{otherwise.}
%\end{array}\right.
%\]
%\[g_\zeta(x)=\frac{1}{\sigma}\phi\left(\frac{x-\mu}{\sigma}\right).\]
Recall that the complementary first order loss function is defined as
\[\widehat{\mathcal{L}}(x,\zeta)=\int_{-\infty}^{x} (x-t)g_\zeta(t)\,dt.\]
We change the upper integration limit to 
\[f(x)=\frac{x-\mu}{\sigma}\]
by noting that 
\[f^{-1}(y)=\sigma y+\mu,~~~~~\frac{df^{-1}(y)}{dy}=\sigma\]
it follows that 
\begin{align}
\widehat{\mathcal{L}}(x,\zeta)	&=\int_{-\infty}^{f(x)} (x-f^{-1}(t))g_\zeta(f^{-1}(t))\frac{df^{-1}(t)}{dt}\,dt.\\
						&=\sigma\int_{-\infty}^{\frac{x-\mu}{\sigma}} (x-\sigma t -\mu)\frac{1}{\sigma}\phi(t)\,dt.\\
						&=\sigma\int_{-\infty}^{\frac{x-\mu}{\sigma}} \sigma\left(\frac{x-\mu}{\sigma} - t\right)\frac{1}{\sigma}\phi(t)\,dt.
\end{align}
\end{proof}

\begin{lem}\label{thm:compl_fol_closed}
The complementary first order loss function $\widehat{\mathcal{L}}(x,\zeta)$ can be rewritten in closed form as
\[\widehat{\mathcal{L}}(x,\zeta)=\sigma\left(\phi\left(\frac{x-\mu}{\sigma}\right)+\Phi\left(\frac{x-\mu}{\sigma}\right)\frac{x-\mu}{\sigma}\right)\]
\end{lem}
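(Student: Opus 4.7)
The plan is to reduce to the standard normal case via Lemma \ref{lem:folf_std_norm} and then evaluate the resulting integral in closed form using the elementary identity $\phi'(t)=-t\,\phi(t)$.

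First I would invoke Lemma \ref{lem:folf_std_norm} to write
\[\widehat{\mathcal{L}}(x,\zeta)=\sigma\,\widehat{\mathcal{L}}\!\left(\tfrac{x-\mu}{\sigma},Z\right),\]
so it suffices to prove the closed form in the standardised case: for a standard normal $Z$ and any $z\in\mathbb{R}$,
\[\widehat{\mathcal{L}}(z,Z)=\phi(z)+z\,\Phi(z).\]
Setting $z=(x-\mu)/\sigma$ and multiplying through by $\sigma$ then yields the stated expression.

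Next, starting from the original integral form of the complementary loss function,
\[\widehat{\mathcal{L}}(z,Z)=\int_{-\infty}^{z}(z-t)\,\phi(t)\,dt=z\,\Phi(z)-\int_{-\infty}^{z}t\,\phi(t)\,dt,\]
I would evaluate the remaining integral using the well-known differentiation identity for the standard normal density, namely $\tfrac{d}{dt}\phi(t)=-t\,\phi(t)$. This makes $-\phi(t)$ an antiderivative of $t\,\phi(t)$, so
\[\int_{-\infty}^{z}t\,\phi(t)\,dt=\bigl[-\phi(t)\bigr]_{-\infty}^{z}=-\phi(z),\]
where the lower limit vanishes because $\phi(t)\to 0$ as $t\to-\infty$. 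Substituting back gives $\widehat{\mathcal{L}}(z,Z)=z\,\Phi(z)+\phi(z)$, which combined with the standardisation step completes the proof.

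There is no real obstacle here: the result is essentially a textbook integration, and the only thing to be careful about is the sign coming from $\phi'(t)=-t\phi(t)$ and the vanishing of the boundary term at $-\infty$, which is guaranteed by the super-exponential decay of $\phi$. One could alternatively start from the representation $\widehat{\mathcal{L}}(z,Z)=\int_{-\infty}^{z}\Phi(t)\,dt$ of Lemma \ref{thm:compl_fol} and integrate by parts with $u(t)=\Phi(t)$, $v(t)=t$, using $\lim_{t\to-\infty}t\,\Phi(t)=0$ (which follows from the tail bound $\Phi(t)\le\phi(t)/|t|$ for $t<0$); this route yields the same identity and serves as a useful cross-check.
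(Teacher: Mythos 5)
Your proof is correct and follows essentially the same route as the paper: reduce to the standard normal case via Lemma \ref{lem:folf_std_norm} and then exploit the identity $\int_{-\infty}^{z}t\,\phi(t)\,dt=-\phi(z)$, which is exactly the observation the paper's one-line proof cites. The only cosmetic difference is that you split the defining integral $\int_{-\infty}^{z}(z-t)\phi(t)\,dt$ directly while the paper integrates $\int_{-\infty}^{z}\Phi(t)\,dt$ by parts, and you even note that second route as your cross-check.
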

\begin{proof}
Integrate by parts Eq. \ref{eq:compl_fol_norm} and observe that $\int_{-\infty}^{\frac{x-\mu}{\sigma}}t\phi(t)\,dt=-\phi(\frac{x-\mu}{\sigma})$.
\end{proof}

From Lemma \ref{thm:relationship_fol_com_1} the first order loss function of $\zeta$ can be expressed as
\begin{equation}\label{eq:fol_norm}
\mathcal{L}(x,\zeta)=-(x-\mu)+\sigma\int_{-\infty}^{\frac{x-\mu}{\sigma}} \Phi(t)\,dt
\end{equation}
Recall that an alternative expression is obtained via Lemma \ref{thm:relationship_fol_com_2},
\begin{equation}\label{eq:fol_norm_1}
\mathcal{L}(x,\zeta)=\sigma\int_{\frac{x-\mu}{\sigma}}^{\infty} (1-\Phi(t))\,dt.
\end{equation}
From Lemma \ref{thm:relationship_fol_com_1} and Lemma \ref{thm:compl_fol_closed} we obtain the closed form expression
\begin{equation}\label{eq:fol_norm_closed}
\mathcal{L}(x,\zeta)=-(x-\mu)+\sigma\left(\phi\left(\frac{x-\mu}{\sigma}\right)+\Phi\left(\frac{x-\mu}{\sigma}\right)\frac{x-\mu}{\sigma}\right)
\end{equation}

\begin{lem}\label{thm:compl_fol_closed_2}
The first order loss function $\mathcal{L}(x,\zeta)$ can be rewritten in closed form as
\[\mathcal{L}(x,\zeta)=\sigma\left(\phi\left(\frac{x-\mu}{\sigma}\right)-\left(1-\Phi\left(\frac{x-\mu}{\sigma}\right)\right)\frac{x-\mu}{\sigma}\right)\]
\end{lem}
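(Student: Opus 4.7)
The statement to be proved is a closed-form expression for $\mathcal{L}(x,\zeta)$ that is essentially a rearrangement of the closed form given in Equation \ref{eq:fol_norm_closed}. My plan is therefore to obtain the identity by pure algebraic manipulation rather than by re-doing the integration.

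First, I will invoke Lemma \ref{thm:relationship_fol_com_1} to write $\mathcal{L}(x,\zeta)=\widehat{\mathcal{L}}(x,\zeta)-(x-\mu)$ (using $\tilde{\omega}=\mu$ for a normal random variable), and then substitute the closed-form expression for $\widehat{\mathcal{L}}(x,\zeta)$ from Lemma \ref{thm:compl_fol_closed}. This yields
\[
\mathcal{L}(x,\zeta)=\sigma\phi\!\left(\tfrac{x-\mu}{\sigma}\right)+\sigma\Phi\!\left(\tfrac{x-\mu}{\sigma}\right)\tfrac{x-\mu}{\sigma}-(x-\mu),
\]
which is exactly Equation \ref{eq:fol_norm_closed}.

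The remaining step is to absorb the $-(x-\mu)$ term into the coefficient of $\tfrac{x-\mu}{\sigma}$. Writing $-(x-\mu)=-\sigma\cdot\tfrac{x-\mu}{\sigma}$ and factoring out $\sigma\cdot\tfrac{x-\mu}{\sigma}$ from the two linear-in-$(x-\mu)$ terms gives
\[
\sigma\Phi\!\left(\tfrac{x-\mu}{\sigma}\right)\tfrac{x-\mu}{\sigma}-\sigma\tfrac{x-\mu}{\sigma}=-\sigma\!\left(1-\Phi\!\left(\tfrac{x-\mu}{\sigma}\right)\right)\tfrac{x-\mu}{\sigma},
\]
and substituting back yields the claimed formula.

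There is no real obstacle: the lemma is essentially a bookkeeping identity made possible because $\Phi-1=-(1-\Phi)$. As an alternative (should the author prefer a self-contained derivation) one could start from Equation \ref{eq:fol_norm_1}, namely $\mathcal{L}(x,\zeta)=\sigma\int_{(x-\mu)/\sigma}^{\infty}(1-\Phi(t))\,dt$, and integrate by parts with $u=1-\Phi(t)$, $dv=dt$, using $\int_{(x-\mu)/\sigma}^{\infty} t\phi(t)\,dt=\phi\!\left(\tfrac{x-\mu}{\sigma}\right)$; however, the algebraic route via Lemmas \ref{thm:relationship_fol_com_1} and \ref{thm:compl_fol_closed} is shorter and more in keeping with the style of the preceding results.
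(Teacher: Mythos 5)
Your proof is correct, but it takes a different (and purely algebraic) route from the one-line proof in the paper. The paper proves the lemma by integrating by parts the integral representation of $\mathcal{L}(x,\zeta)$, using the observation that $\int_{\frac{x-\mu}{\sigma}}^{\infty}t\phi(t)\,dt=\phi\left(\frac{x-\mu}{\sigma}\right)$ --- essentially the ``alternative'' you sketch at the end (and, as you note, the natural starting point for that computation is Eq.~\ref{eq:fol_norm_1} rather than Eq.~\ref{eq:fol_norm}, even though the paper cites the latter). Your main argument instead combines Lemma~\ref{thm:relationship_fol_com_1} with Lemma~\ref{thm:compl_fol_closed} and then uses the identity $\Phi-1=-(1-\Phi)$ to absorb the $-(x-\mu)$ term; this is exactly the derivation the paper itself performs to obtain Eq.~\ref{eq:fol_norm_closed} in the text immediately preceding the lemma, so you are in effect observing that the lemma is Eq.~\ref{eq:fol_norm_closed} rewritten, which makes the result transparent without redoing any integration. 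Both arguments are valid and equally short; yours has the small advantage of reusing results already established and making clear that no new analytic work is needed, while the paper's keeps the proof parallel in form to that of Lemma~\ref{thm:compl_fol_closed}.
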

\begin{proof}
Integrate by parts Eq. \ref{eq:fol_norm} and observe that $\int_{\frac{x-\mu}{\sigma}}^{\infty}t\phi(t)\,dt=\phi(\frac{x-\mu}{\sigma})$.
\end{proof}

Furthermore, since the Normal distribution is symmetric, both Lemma \ref{thm:fol_symm} and Lemma \ref{thm:fol_automorphism} hold.

\section{Jensen's lower bound for the standard normal first order loss function}\label{sec:lb}

We introduce a well-known inequality from stochastic programming \cite{citeulike:695971}, p. 167.

\subsection{Jensen's lower bound}

\begin{thm}[Jensen's inequality]\label{jensens}
Consider a random variable $\omega$ with support $\Omega$ and a function $f(x,s)$, which for a fixed $x$ is convex for all $s\in\Omega$, then
\[\mbox{E}[f(x,\omega)]\geq f(x,\mbox{E}[\omega]).\]
\end{thm}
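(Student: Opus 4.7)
The plan is to derive Jensen's inequality from the supporting hyperplane characterisation of convex functions. For fixed $x$, the map $s \mapsto f(x,s)$ is convex on $\Omega$, so at the point $s_0 = \mbox{E}[\omega]$ there exists a subgradient $a \in \mathbb{R}$ such that
\[
f(x, s) \geq f(x, s_0) + a \, (s - s_0) \qquad \text{for all } s \in \Omega.
\]
This linear minorant is the key object; once it is in hand, the remainder of the argument is essentially mechanical.

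The second step is to substitute the random variable $\omega$ for the dummy variable $s$, so that the inequality holds pointwise (almost surely), and then take expectations of both sides. By the linearity of expectation,
\[
\mbox{E}[f(x, \omega)] \geq f(x, s_0) + a \, \bigl(\mbox{E}[\omega] - s_0\bigr).
\]
Since $s_0$ was chosen to be $\mbox{E}[\omega]$, the second term on the right vanishes, leaving $\mbox{E}[f(x,\omega)] \geq f(x, \mbox{E}[\omega])$, which is exactly the claim.

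The main obstacle, if one wishes to be fully rigorous, is justifying the existence of the subgradient $a$. For a convex function on $\mathbb{R}$ this is classical: the left and right derivatives exist at every interior point of the effective domain, and any value between them serves as a subgradient; boundary cases are handled via one-sided derivatives. In the setting used throughout this paper, where $\Omega \subseteq \mathbb{R}$ and $\mbox{E}[\omega]$ lies in the convex hull of $\Omega$, this technicality is not a concern. One should also implicitly assume that $\mbox{E}[f(x,\omega)]$ is well defined, otherwise the inequality should be read in the extended-real sense.
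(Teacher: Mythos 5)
Your proof is correct: the supporting-line (subgradient) argument is the classical proof of Jensen's inequality, and your handling of the two technical caveats (existence of the subgradient for a convex function on $\mathbb{R}$, and well-definedness of $\mbox{E}[f(x,\omega)]$) is appropriate. The paper itself offers no argument here at all --- it simply cites a textbook --- so your write-up supplies a self-contained proof where the paper defers to a reference; the approach you take is the standard one that such references use, so there is nothing to reconcile.
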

\begin{proof}
\cite{citeulike:2516312}, p. 140.
\end{proof}

Common discrete lower bounding approximations in stochastic programming are extensions of Jensen's inequality. The usual strategy is to find a low cardinality discrete set of realisations representing a good approximation of the true underling distribution. 
 \cite{citeulike:2516312}, p. 288, discuss one of these discrete lower bounding approximations which consists in partitioning the support $\Omega$ into a number of disjoint regions, Jensen's bound is then applied in each of these regions.

More formally, let $g_{\omega}(\cdot)$ denote the probability density function of $\omega$ and consider a partition of the support $\Omega$ of $\omega$ into $N$ disjoint compact subregions $\Omega_1,\ldots,\Omega_N$. We define, for all $i=1,\ldots,N$ 
\[p_i=\Pr\{\omega\in \Omega_i\}=\int_{\Omega_i} g_{\omega}(t)\,dt\] 
and 
\[\mbox{E}[\omega|\Omega_i]=\frac{1}{p_i}\int_{\Omega_i} t g_{\omega}(t)\,dt\]

\begin{thm}\label{discrete_bounding}
\[\mbox{E}[f(x,\omega)]\geq\sum_{i=1}^N p_i f(x,\mbox{E}[\omega|\Omega_i])\]
\end{thm}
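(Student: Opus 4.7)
The plan is to reduce the discrete bound to a pointwise application of Theorem \ref{jensens} on each cell of the partition, then reassemble. First I would write the unconditional expectation as an integral over $\Omega$ with respect to $g_\omega$, and split it along the partition:
\[\mbox{E}[f(x,\omega)]=\int_\Omega f(x,t)g_\omega(t)\,dt=\sum_{i=1}^N \int_{\Omega_i} f(x,t)g_\omega(t)\,dt.\]
Wherever $p_i>0$, I would factor out $p_i$ from the $i$-th summand to expose the conditional density $g_\omega(t)/p_i$ on $\Omega_i$, so that
\[\int_{\Omega_i} f(x,t)g_\omega(t)\,dt=p_i\int_{\Omega_i} f(x,t)\,\frac{g_\omega(t)}{p_i}\,dt=p_i\,\mbox{E}[f(x,\omega)\mid \omega\in\Omega_i].\]

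Next I would apply Jensen's inequality from Theorem \ref{jensens} on each subregion, with $\omega$ replaced by the conditional random variable $\omega\mid \omega\in\Omega_i$ (which has support contained in $\Omega_i\subseteq\Omega$, so the convexity hypothesis of Theorem \ref{jensens} still applies to $f(x,\cdot)$). This yields, for each $i$,
\[\mbox{E}[f(x,\omega)\mid\omega\in\Omega_i]\geq f\bigl(x,\mbox{E}[\omega\mid\Omega_i]\bigr),\]
where $\mbox{E}[\omega\mid\Omega_i]$ is precisely the quantity defined in the theorem's preamble. Multiplying by $p_i\geq 0$ and summing over $i=1,\ldots,N$ yields the claimed inequality. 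Cells with $p_i=0$ contribute zero to both sides and can simply be dropped.

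The only real obstacle is a bookkeeping one: being careful that $\mbox{E}[\omega\mid\Omega_i]$ actually lies in the convex hull of $\Omega_i$ so that $f(x,\cdot)$ is convex at that point. Because the $\Omega_i$ are taken to be compact (and, implicitly, convex subintervals of $\mathbb{R}$ in the intended application to $\Omega=\mathbb{R}$), the conditional mean lies inside $\Omega_i\subseteq\Omega$, and the convexity hypothesis of Theorem \ref{jensens} covers it directly. Everything else is a routine splitting of integrals, so no further work is needed.
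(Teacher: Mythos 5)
Your proof is correct and is the standard argument: split $\mbox{E}[f(x,\omega)]$ over the partition, apply Jensen's inequality to each conditional distribution $\omega\mid\omega\in\Omega_i$, and recombine via the law of total expectation, with the degenerate cells $p_i=0$ handled separately. The paper does not spell out a proof of this theorem at all --- it simply cites the stochastic programming literature --- and the argument you give is precisely the one found there, so there is nothing to add.
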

\begin{proof}
\cite{citeulike:2516312}, p. 289.
\end{proof}

\begin{thm}
Given a random variable $\omega$ Jensen's bound (Theorem \ref{jensens}) is applicable to the first order loss function $\mathcal{L}(x,\omega)$ and its complementary function $\widehat{\mathcal{L}}(x,\omega)$. 
\end{thm}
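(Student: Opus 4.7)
The plan is to verify that the convexity hypothesis in the statement of Jensen's inequality (Theorem \ref{jensens}) is met by the integrands defining $\mathcal{L}(x,\omega)$ and $\widehat{\mathcal{L}}(x,\omega)$. Recall from the definitions that
\[
\mathcal{L}(x,\omega)=\mathrm{E}[\max(\omega-x,0)]=\mathrm{E}[f(x,\omega)],\qquad \widehat{\mathcal{L}}(x,\omega)=\mathrm{E}[\max(x-\omega,0)]=\mathrm{E}[\hat{f}(x,\omega)],
\]
where $f(x,s)=\max(s-x,0)$ and $\hat{f}(x,s)=\max(x-s,0)$. Jensen's inequality requires convexity of these integrand functions in the argument corresponding to the random variable, i.e.\ in $s$, for each fixed $x$.

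First I would fix $x$ and observe that $s\mapsto f(x,s)$ is the pointwise maximum of the two affine (hence convex) maps $s\mapsto s-x$ and $s\mapsto 0$; similarly $s\mapsto \hat{f}(x,s)$ is the pointwise maximum of $s\mapsto x-s$ and $s\mapsto 0$. Since the pointwise maximum of convex functions is convex, both $f(x,\cdot)$ and $\hat{f}(x,\cdot)$ are convex on $\mathbb{R}$, which is precisely the hypothesis required to invoke Theorem \ref{jensens}. Applying that theorem then yields
\[
\mathcal{L}(x,\omega)\geq \max(\tilde\omega-x,0),\qquad \widehat{\mathcal{L}}(x,\omega)\geq \max(x-\tilde\omega,0),
\]
with $\tilde\omega=\mathrm{E}[\omega]$, establishing applicability.

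There is essentially no obstacle here beyond the convexity check, and I do not need to appeal to Lemma \ref{lem:convexity_loss_function}, which instead concerns convexity in $x$ rather than in the random variable. The argument is thus a one-line verification of the hypothesis of Theorem \ref{jensens}, followed by a direct invocation of that theorem.
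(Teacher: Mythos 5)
Your proof is correct, but it takes a genuinely different route from the paper's. The paper's entire proof is ``Follows immediately from Lemma \ref{lem:convexity_loss_function}'', i.e.\ it appeals to the convexity of $\mathcal{L}(x,\omega)$ and $\widehat{\mathcal{L}}(x,\omega)$ \emph{in $x$}. You instead verify the hypothesis of Theorem \ref{jensens} as literally stated --- convexity of the integrand $f(x,\cdot)$ in the random argument $s$ for fixed $x$ --- by writing $\max(s-x,0)$ and $\max(x-s,0)$ as pointwise maxima of affine functions of $s$. Your closing remark is well taken: Lemma \ref{lem:convexity_loss_function} establishes convexity of the \emph{expectation} in $x$, which is not the same property that Jensen's inequality requires, so your verification is the one that directly matches the hypothesis of Theorem \ref{jensens}, and it also yields the explicit bounds $\mathcal{L}(x,\omega)\geq\max(\tilde\omega-x,0)$ and $\widehat{\mathcal{L}}(x,\omega)\geq\max(x-\tilde\omega,0)$ as a bonus. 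In the paper's defence, the function $\max(x-s,0)$ is jointly convex in $(x,s)$, so both convexity statements are true and equally elementary, and the convexity in $x$ from Lemma \ref{lem:convexity_loss_function} is the property actually exploited later (tangency of the linear segments, maximum error at breakpoints); but as a justification of the applicability of Theorem \ref{jensens} itself, your argument is the more precise one.
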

\begin{proof}
Follows immediately from Lemma \ref{lem:convexity_loss_function}.
%This can be observed by noting that $\widehat{\mathcal{L}}(x,\omega)=\mbox{E}_\omega[\max(x-\omega,0)]=\int_{\omega}\max(x-\omega,0)g_\omega(t)\,dt$, where $g_{\omega}(\cdot)$ denotes the probability density function of $\omega$. Since $\mbox{E}_\omega[\cdot]$ and $\max(\cdot,0)$ are both convex operators and $x-\omega$ is linear in $\omega$ the result follows immediately. The proof for $\mathcal{L}(x,\omega)$ follows a similar reasoning. 
\end{proof}
Having established this result, we must then decide how to partition the support $\omega$ in order to obtain a good bound. In fact, to generate good lower bounds, it is necessary to carefully select the partition of the support $\omega$. The optimal partitioning strategy will depend, of course, on the probability distribution of the random variable $\omega$. 
%In what follows, we will discuss partitioning strategies for a normally distributed random variable $\zeta$ with mean $\mu$ and standard deviation $\sigma$.

\subsection{Minimax discrete lower bounding approximation}

We discuss a minimax strategy for generating discrete lower bounding approximations of the (complementary) first order loss function. In this strategy, we partition the support $\omega$ into a predefined number of regions $N$ in order to minimise the maximum approximation error.

Consider a random variable $\omega$ and the associated complementary first order loss function
%\[\widehat{\mathcal{L}}(x,\omega)=\int_{-\infty}^{x} G_\omega(t)\,dt.\]
\[\widehat{\mathcal{L}}(x,\omega)=\mbox{E}[\max(x-\omega,0)];\]
assume that the support $\Omega$ of $\omega$ is partitioned into $N$ disjoint subregions $\Omega_1,\ldots,\Omega_N$.

\begin{lem}\label{lem:piecewise_linear}
For the (complementary) first order loss function the lower bound presented in Theorem \ref{discrete_bounding} is a piecewise linear function with $N+1$ segments.
\end{lem}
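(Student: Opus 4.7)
The plan is to evaluate the Jensen-based bound $\sum_{i=1}^N p_i \widehat{\mathcal{L}}(x, \mathbb{E}[\omega|\Omega_i])$ explicitly and observe that it is a sum of ``hockey-stick'' functions, which is a standard recipe for producing piecewise linear functions with a controlled number of breakpoints.

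First I would note that each conditional mean $c_i := \mathbb{E}[\omega|\Omega_i]$ is a deterministic scalar, so the complementary loss function evaluated at a degenerate random variable equal to $c_i$ collapses to
\[\widehat{\mathcal{L}}(x, c_i) = \mathbb{E}[\max(x-c_i,0)] = \max(x-c_i,0).\]
Substituting this into the bound of Theorem \ref{discrete_bounding} yields
\[\sum_{i=1}^N p_i \max(x - c_i, 0),\]
which is manifestly a nonnegative combination of $N$ one-kink piecewise linear functions.

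Next I would reorder the regions so that $c_1 \leq c_2 \leq \ldots \leq c_N$ (which is possible without loss of generality because the sum is invariant under relabelling of the partition), and analyse the expression on the $N+1$ intervals $(-\infty, c_1]$, $[c_1,c_2]$, $\ldots$, $[c_{N-1}, c_N]$, $[c_N, \infty)$ induced by these $N$ breakpoints. On the $k$-th such interval the first $k-1$ terms contribute their linear part $p_i(x-c_i)$ while the remaining terms are identically zero, so the sum reduces to the affine function
\[\sum_{i=1}^{k-1} p_i (x - c_i),\]
which depends linearly on $x$. Continuity at each breakpoint is automatic because each $\max(x-c_i,0)$ is continuous. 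This gives $N+1$ linear pieces, proving the claim. The extension to $\mathcal{L}(x,\omega)$ is then immediate via Lemma \ref{thm:relationship_fol_com_1}, since adding the affine term $-(x-\tilde{\omega})$ preserves the piecewise linear structure and the number of segments.

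There is no real obstacle; the only subtlety worth highlighting is the case of coincident conditional means $c_i = c_{i+1}$, in which case two adjacent segments merge and one formally obtains fewer than $N+1$ distinct pieces. This is a degenerate case that can be ruled out by insisting that the partition be chosen so that the conditional means are distinct (which will be the case for the partitions constructed in the subsequent minimax analysis), or by stating the result as ``at most $N+1$ segments.''
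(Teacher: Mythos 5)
Your proof is correct and follows essentially the same route as the paper's: both substitute $f(x,\omega)=\max(x-\omega,0)$ into the bound of Theorem \ref{discrete_bounding}, recognise the result as a nonnegative sum of one-kink functions $p_i\max(x-\mbox{E}[\omega|\Omega_i],0)$, and read off the $N+1$ affine pieces separated by the breakpoints $\mbox{E}[\omega|\Omega_1],\ldots,\mbox{E}[\omega|\Omega_N]$. Your closing remark on coincident conditional means is a reasonable extra precaution, though in the paper's setting the $\Omega_i$ are adjacent intervals of the support, so the conditional means are automatically distinct and increasing.
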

\begin{proof}
Consider the bound presented in Theorem \ref{discrete_bounding} and let $f(x,\omega)=\max(x-\omega,0)$, 
\[\widehat{\mathcal{L}}_{lb}(x,\omega)=\sum_{i=1}^N p_i \max(x-\mbox{E}[\omega|\Omega_i],0)\]
this function is equivalent to
{\scriptsize
\[\widehat{\mathcal{L}}_{lb}(x,\omega)=\left\{
\begin{array}{ll}
0&-\infty\leq x\leq \mbox{E}[\omega|\Omega_1]\\
p_1 x - p_1\mbox{E}[\omega|\Omega_1]&\mbox{E}[\omega|\Omega_1]\leq x\leq \mbox{E}[\omega|\Omega_2]\\
(p_1+p_2) x - (p_1\mbox{E}[\omega|\Omega_1]+p_2\mbox{E}[\omega|\Omega_2])&\mbox{E}[\omega|\Omega_2]\leq x\leq \mbox{E}[\omega|\Omega_3]\\
\vdots&\vdots\\
(p_1+p_2+\hdots+p_N) x - (p_1\mbox{E}[\omega|\Omega_1]+p_2\mbox{E}[\omega|\Omega_2]+\hdots+p_N\mbox{E}[\omega|\Omega_N])&\mbox{E}[\omega|\Omega_{N-1}]\leq x\leq \mbox{E}[\omega|\Omega_N]\\
\end{array}
\right.\]\\}
which is piecewise linear in $x$ with breakpoints at $\mbox{E}[\omega|\Omega_1],\mbox{E}[\omega|\Omega_2],\ldots,\mbox{E}[\omega|\Omega_N]$. The proof for the first order loss function follows a similar reasoning.
\end{proof}

\begin{lem}\label{lem:tangent}
Consider the $i$-th linear segment of $\widehat{\mathcal{L}}_{lb}(x,\omega)$
%{\scriptsize
%\[
%\widehat{\mathcal{L}}^i_{lb}(x,\omega)=(p_1+p_2+\ldots+p_i) x - (p_1\mbox{E}[\omega|\Omega_1]+p_2\mbox{E}[\omega|\Omega_2]+\ldots+p_i\mbox{E}[\omega|\Omega_i])~~~~\mbox{E}[\omega|\Omega_{i}]\leq x\leq \mbox{E}[\omega|\Omega_{i+1}],\]\\}
\[\widehat{\mathcal{L}}^i_{lb}(x,\omega)=x\sum_{k=1}^i p_k-\sum_{k=1}^i p_k \mbox{E}[\omega|\Omega_k]~~~~\mbox{E}[\omega|\Omega_{i}]\leq x\leq \mbox{E}[\omega|\Omega_{i+1}],\]
where $i=1,\ldots,N$. Let $\Omega_i=[a,b]$, then $\widehat{\mathcal{L}}^i_{lb}(x,\omega)$ is tangent to $\widehat{\mathcal{L}}(x,\omega)$ at $x=b$. Furthermore, the $0$-th segment $x=0$ is tangent to $\widehat{\mathcal{L}}(x,\omega)$ at $x=-\infty$.
\end{lem}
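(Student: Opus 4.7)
The plan is to verify tangency in the standard way: show that at $x=b$ the piecewise linear segment $\widehat{\mathcal{L}}^i_{lb}(x,\omega)$ and the smooth function $\widehat{\mathcal{L}}(x,\omega)$ agree both in value and in first derivative. The key tool is Lemma \ref{thm:compl_fol}, which gives $\widehat{\mathcal{L}}(x,\omega)=\int_{-\infty}^x G_\omega(t)\,dt$ and, by the fundamental theorem of calculus, $\frac{d}{dx}\widehat{\mathcal{L}}(x,\omega)=G_\omega(x)$.

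First I would match the slopes. The $i$-th segment has constant slope $\sum_{k=1}^i p_k$. Since the breakpoints $\mbox{E}[\omega|\Omega_1]\leq\mbox{E}[\omega|\Omega_2]\leq\cdots$ are ordered, the subregions $\Omega_1,\ldots,\Omega_i$ are precisely those lying to the left of $b$; hence $\Omega_1\cup\cdots\cup\Omega_i=(-\infty,b]$ (up to the $g_\omega$-null endpoints), and
\[
\sum_{k=1}^i p_k=\sum_{k=1}^i \int_{\Omega_k} g_\omega(t)\,dt=\int_{-\infty}^{b} g_\omega(t)\,dt=G_\omega(b),
\]
which equals $\widehat{\mathcal{L}}'(b,\omega)$.

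Next I would match the values at $x=b$. Using the definition of the conditional expectations,
\[
\widehat{\mathcal{L}}^i_{lb}(b,\omega)=b\sum_{k=1}^i p_k-\sum_{k=1}^i p_k\,\mbox{E}[\omega|\Omega_k]=b\,G_\omega(b)-\sum_{k=1}^i\int_{\Omega_k} t\,g_\omega(t)\,dt=b\,G_\omega(b)-\int_{-\infty}^{b} t\,g_\omega(t)\,dt,
\]
while the original definition gives
\[
\widehat{\mathcal{L}}(b,\omega)=\int_{-\infty}^{b}(b-t)g_\omega(t)\,dt=b\,G_\omega(b)-\int_{-\infty}^{b} t\,g_\omega(t)\,dt.
\]
The two expressions coincide, which together with the slope match establishes tangency at $x=b$.

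For the $0$-th segment, which is identically zero, tangency at $x=-\infty$ is a limiting statement: as $x\to -\infty$, $G_\omega(x)\to 0$ so the derivative of $\widehat{\mathcal{L}}$ tends to the slope $0$ of the segment, and $\widehat{\mathcal{L}}(x,\omega)=\int_{-\infty}^{x}G_\omega(t)\,dt\to 0$, matching the value of the segment. I would justify the last limit by dominating $G_\omega$ on $(-\infty,x]$ using monotonicity and integrability of $G_\omega$ near $-\infty$ (which follows from $\mbox{E}|\omega|<\infty$). I do not expect any serious obstacle; the only subtle point is being explicit about the ordering of the $\Omega_k$'s so that $\Omega_1\cup\cdots\cup\Omega_i=(-\infty,b]$, which is implicit in the form of the piecewise definition given in Lemma \ref{lem:piecewise_linear}.
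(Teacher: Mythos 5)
Your proof is correct and takes essentially the same route as the paper: both arguments hinge on the observation that $\Omega_1\cup\cdots\cup\Omega_i=(-\infty,b]$, so that the segment's slope is $G_\omega(b)$ and its value at $b$ equals $\widehat{\mathcal{L}}(b,\omega)$. The only cosmetic difference is that you verify value and derivative agreement separately (using the original integral definition of $\widehat{\mathcal{L}}$ at $b$), whereas the paper rewrites the whole segment into the tangent-line form $G_\omega(b)(x-b)+\int_{-\infty}^{b}G_\omega(t)\,dt$; these are equivalent, and your treatment of the $x\to-\infty$ case is if anything slightly more explicit.
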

\begin{proof}
%Rewrite
%\[\widehat{\mathcal{L}}^i_{lb}(x,\omega)=x\sum_{k=1}^i p_k-\sum_{k=1}^i p_k \mbox{E}[\omega|\Omega_k]\]
Note that
\[\widehat{\mathcal{L}}^i_{lb}(x,\omega)=x\sum_{k=1}^i \int_{\Omega_k} g_{\omega}(t)\;dt-\sum_{k=1}^i \int_{\Omega_k} t g_{\omega}(t)\;dt\]
and that \[\Omega_1\cup\Omega_2\cup\ldots\cup\Omega_i=)-\infty,b]\]
it follows
\[\widehat{\mathcal{L}}^i_{lb}(x,\omega)=x \int_{-\infty}^b g_{\omega}(t)\;dt-\int_{-\infty}^b t g_{\omega}(t)\;dt\]
and
\[\widehat{\mathcal{L}}^i_{lb}(x,\omega)=G_{\omega}(b)(x-b)+\int_{-\infty}^b G_{\omega}(t)\;dt.\]
which is the equation of the tangent line to $\widehat{\mathcal{L}}(x,\omega)$ at a given point $b$, that is
\[y=\widehat{\mathcal{L}}(b,\omega)'(x-b)+\widehat{\mathcal{L}}(b,\omega).\]
$x=0$ is tangent to $\widehat{\mathcal{L}}(x,\omega)$ at $x=-\infty$ since $\widehat{\mathcal{L}}(x,\omega)$ is convex, positive and
\[\lim_{x\rightarrow -\infty} \widehat{\mathcal{L}}(x,\omega)=0.\]
The very same reasoning can be easily applied to the first order loss function.
\end{proof}

\begin{lem}\label{lem:max_error_at_breakpoints}
The maximum approximation error between $\widehat{\mathcal{L}}_{lb}(x,\omega)$ and $\widehat{\mathcal{L}}(x,\omega)$ will be attained at a breakpoint.
\end{lem}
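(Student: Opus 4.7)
The plan is to exploit convexity of the approximation error on each linear piece of $\widehat{\mathcal{L}}_{lb}$. Define
\[
e(x)=\widehat{\mathcal{L}}(x,\omega)-\widehat{\mathcal{L}}_{lb}(x,\omega),
\]
which is nonnegative by Theorem \ref{discrete_bounding}. The goal is to show that $\sup_{x\in\mathbb{R}} e(x)$ is attained at one of the breakpoints $\mbox{E}[\omega|\Omega_1],\ldots,\mbox{E}[\omega|\Omega_N]$.

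First, I would restrict attention to a single bounded linear segment, say on $[\mbox{E}[\omega|\Omega_i],\mbox{E}[\omega|\Omega_{i+1}]]$. By Lemma \ref{lem:piecewise_linear} the bound $\widehat{\mathcal{L}}_{lb}$ is affine on this interval, and by Lemma \ref{lem:convexity_loss_function} the true complementary loss $\widehat{\mathcal{L}}(x,\omega)$ is convex everywhere. The difference of a convex function and an affine one is convex, so $e(x)$ is convex on each such interval. Since a convex function on a closed interval attains its maximum at an endpoint, the maximum of $e$ over each bounded segment is achieved at one of its two breakpoints.

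Second, I would handle the two unbounded pieces. On the leftmost piece $(-\infty,\mbox{E}[\omega|\Omega_1]]$, Lemma \ref{lem:tangent} states that the $0$-th segment is tangent to $\widehat{\mathcal{L}}$ at $-\infty$, so $e(x)\to 0$ as $x\to-\infty$; convexity of $e$ on this piece then forces its maximum to sit at the right endpoint $\mbox{E}[\omega|\Omega_1]$. On the rightmost piece $[\mbox{E}[\omega|\Omega_N],\infty)$, a direct computation from $\widehat{\mathcal{L}}_{lb}(x,\omega)=\sum_{i=1}^N p_i\max(x-\mbox{E}[\omega|\Omega_i],0)$ gives $\widehat{\mathcal{L}}_{lb}(x,\omega)=x-\tilde\omega$, so by Lemma \ref{thm:relationship_fol_com_1} we have $e(x)=\mathcal{L}(x,\omega)\to 0$ as $x\to+\infty$; convexity again places the maximum at the left endpoint $\mbox{E}[\omega|\Omega_N]$.

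Taking the maximum across the finitely many segment maxima yields the result, and the reasoning for $\mathcal{L}(x,\omega)$ is identical by Lemma \ref{thm:relationship_fol_com_1}. I do not foresee any genuine obstacle; the only mildly delicate point is the treatment of the two unbounded pieces, which is settled by Lemma \ref{lem:tangent} together with the asymptotic behaviour of $\mathcal{L}(x,\omega)$ at $+\infty$.
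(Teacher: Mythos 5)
Your proof is correct and follows essentially the same route as the paper's (much terser) argument: convexity of $\widehat{\mathcal{L}}$ plus piecewise linearity of $\widehat{\mathcal{L}}_{lb}$ makes the error convex on each piece and hence maximised at an endpoint, with the tangency/asymptotic behaviour handling the two unbounded pieces. Your write-up merely fills in the details the paper leaves implicit, including the correct observation that $\widehat{\mathcal{L}}_{lb}(x,\omega)=x-\tilde\omega$ on the rightmost piece so that the error there equals $\mathcal{L}(x,\omega)\to 0$.
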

\begin{proof}
By recalling that $\widehat{\mathcal{L}}(x,\omega)$ is convex (Lemma \ref{lem:convexity_loss_function}), since $\widehat{\mathcal{L}}_{lb}(x,\omega)$ is piecewise linear (Lemma \ref{lem:piecewise_linear}) and each segment of $\widehat{\mathcal{L}}_{lb}(x,\omega)$ is tangent to $\widehat{\mathcal{L}}(x,\omega)$ (Lemma \ref{lem:tangent}), it follows that the maximum error will be attained at a breakpoint. 
\end{proof}

\begin{thm}\label{thm:approx_err_equal}
Given the number of regions $N$, $\Omega_1,\ldots,\Omega_N$ is an optimal partition of the support $\Omega$ of $\omega$ under a minimax strategy, if and only if approximation errors at breakpoints are all equal.
\end{thm}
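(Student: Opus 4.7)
My plan is to reduce the minimax problem to a chain of adjacent two-breakpoint subproblems. I will write the partition as $-\infty = b_0 < b_1 < \cdots < b_{N-1} < b_N = +\infty$, so that $\Omega_i = [b_{i-1},b_i]$, and denote by $x_i = \mbox{E}[\omega|\Omega_i]$ the $i$-th breakpoint and by $e_i = \widehat{\mathcal{L}}(x_i,\omega) - \widehat{\mathcal{L}}_{lb}(x_i,\omega)$ the error there. By Lemma \ref{lem:max_error_at_breakpoints} the minimax objective is $M(\mathbf{b}) = \max_i e_i$, with free variables $b_1,\ldots,b_{N-1}$. The key structural observation is that each $e_i$ depends only on the two adjacent boundaries $b_{i-1}$ and $b_i$: Lemma \ref{lem:tangent} lets me express $\widehat{\mathcal{L}}_{lb}(x_i,\omega)$ as either the tangent to $\widehat{\mathcal{L}}$ at $b_{i-1}$ (segment $i-1$) or the tangent at $b_i$ (segment $i$), and both expressions involve only those two boundaries.

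I would then establish the following monotonicity claim: for fixed $b_{i-1}$ and $b_{i+1}$, the map $b_i \mapsto e_i$ is strictly increasing while $b_i \mapsto e_{i+1}$ is strictly decreasing. Differentiating the conditional mean gives $\partial x_i/\partial b_i = g_\omega(b_i)(b_i-x_i)/p_i > 0$, from which $\partial e_i/\partial b_i = [G_\omega(x_i) - G_\omega(b_{i-1})]\,\partial x_i/\partial b_i > 0$ follows at once; a symmetric calculation starting from segment $i$'s tangent at $b_i$ yields $\partial e_{i+1}/\partial b_i < 0$. Since $e_i \to 0$ as $b_i \to b_{i-1}^+$ and $e_{i+1} \to 0$ as $b_i \to b_{i+1}^-$, continuity then implies that $b_i \mapsto \max(e_i,e_{i+1})$ is uniquely minimised at the value where $e_i = e_{i+1}$.

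For the necessity direction (``only if'') I would argue by contradiction: if the partition is optimal but $e_j \neq e_{j+1}$ for some $j$, then sliding $b_j$ towards the crossing value strictly decreases $\max(e_j,e_{j+1})$ while leaving every other $e_k$ unchanged (each depends on different boundaries). Whenever this pair contained the global maximum, $M(\mathbf{b})$ strictly decreases, contradicting optimality. For the sufficiency direction (``if'') I would rely on uniqueness: the system $e_1 = e_2 = \cdots = e_N$, consisting of $N-1$ equations in the $N-1$ unknowns $b_1,\ldots,b_{N-1}$, admits at most one solution thanks to the strict monotonicity established above; combined with the existence of a minimax optimum, any all-equal partition must therefore coincide with that optimum.

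The hard part will be the ``plateau'' case in the necessity argument, in which several consecutive $e_i$ simultaneously attain the global maximum: a single-boundary perturbation lowers only two adjacent errors, so naive iteration may not strictly reduce $M(\mathbf{b})$ at the first step. My preferred workaround is to recast the minimax problem as the constrained programme $\min\{M : e_i(\mathbf{b}) \leq M\}$ and apply KKT: the stationarity conditions $\lambda_j\,\partial e_j/\partial b_j + \lambda_{j+1}\,\partial e_{j+1}/\partial b_j = 0$ together with the sign pattern $\partial e_j/\partial b_j > 0$ and $\partial e_{j+1}/\partial b_j < 0$ force all multipliers to share the same sign, hence to be strictly positive once $\sum_i \lambda_i = 1$, whereupon complementary slackness delivers $e_i = M$ for every $i$.
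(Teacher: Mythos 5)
Your proposal rests on the same core observation as the paper's proof: each breakpoint error $e_i$ depends only on the two adjacent region boundaries, and moving the boundary $b_i$ strictly increases $e_i$ while strictly decreasing $e_{i+1}$ (the paper states this qualitatively via the fact that the slope of segment $i+1$ depends only on $b_{i+1}$ while the breakpoint $\mbox{E}[\omega|\Omega_i]$ moves with $b_i$). Where you diverge is in how the two directions are closed. For necessity, the paper runs an informal iterative-repair argument (``shrink the worst breakpoint's error until it matches another, repeat''), which is exactly where the plateau difficulty you identify lives; your KKT formulation $\min\{M : e_i(\mathbf{b})\leq M\}$ handles it cleanly, since the sign pattern $\partial e_j/\partial b_j>0$, $\partial e_{j+1}/\partial b_j<0$ chains the multipliers together and forces them all positive, whence complementary slackness gives $e_i=M$ for every $i$. (MFCQ holds trivially in the epigraph variable, so KKT is legitimate.) For sufficiency, the paper only checks single-boundary perturbations, which establishes a local rather than global claim; your existence-plus-uniqueness route is logically tighter. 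Your explicit derivatives $\partial x_i/\partial b_i = g_\omega(b_i)(b_i-x_i)/p_i$ and $\partial e_i/\partial b_i = [G_\omega(x_i)-G_\omega(b_{i-1})]\,\partial x_i/\partial b_i$ are correct.

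Two details remain open on your side, both fillable. First, ``the system $e_1=\cdots=e_N$ admits at most one solution by strict monotonicity'' is not immediate from the stated monotonicity alone; you need the shooting argument: treat $b_1$ as a parameter, note $e_1$ is strictly increasing in $b_1$, solve $e_2=e_1$ for $b_2$ (unique since $e_2$ is increasing in $b_2$ from $0$), show inductively that every $b_i(b_1)$ is increasing in $b_1$, and observe that the terminal condition $e_N=e_1$ (with $b_N=\infty$ fixed) pits a decreasing function of $b_1$ against an increasing one, giving at most one crossing. Second, the sufficiency argument also needs existence of a minimax optimum (so that ``the unique all-equal partition'' is guaranteed to be that optimum); a short compactness argument noting that errors degenerate as boundaries drift to the tails suffices. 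With those two additions your proof is complete and, in both directions, more rigorous than the one in the paper.
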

\begin{proof}
The approximation errors for $x\rightarrow-\infty$ and $x\rightarrow\infty$ are both 0; since we have N+1 segments, we only have $N$ breakpoints to check. 

We first show that ($\rightarrow$) if $\Omega_1,\ldots,\Omega_N$ is an optimal partition of the support $\Omega$ of $\omega$ under a minimax strategy, then approximation errors at breakpoints are all equal. 

A first observation that follows immediately from Lemma \ref{lem:max_error_at_breakpoints} is that, if the slope of segment $\widehat{\mathcal{L}}^{i+1}_{lb}(x,\omega)$ remains unchanged, and the breakpoint between $\widehat{\mathcal{L}}^i_{lb}(x,\omega)$ and $\widehat{\mathcal{L}}^{i+1}_{lb}(x,\omega)$ moves towards the point at which $\widehat{\mathcal{L}}^{i+1}_{lb}(x,\omega)$ is tangent to $\widehat{\mathcal{L}}(x,\omega)$, the error at such breakpoint decreases.

%Consider the approximation error associated with breakpoint $i$
%\[\widehat{\mathcal{L}}(x,\omega)-\widehat{\mathcal{L}}^i_{lb}(x,\omega)=\int_{-\infty}^x G_{\omega}(t)\;dt - G_{\omega}(b_i)(x-b_i)- \int_{-\infty}^{b_i} G_{\omega}(t)\;dt,\]
%where $\Omega_i=[a_i,b_i]$.
%\[\widehat{\mathcal{L}}(\mbox{E}[\omega|\Omega_i],\omega)-\widehat{\mathcal{L}}^i_{lb}(\mbox{E}[\omega|\Omega_i],\omega)=\int_{b_i}^{\mbox{E}[\omega|\Omega_i]} G_{\omega}(t)\;dt - G_{\omega}(b_i)(\mbox{E}[\omega|\Omega_i]-b_i).\]

If one changes the size of region $\Omega_i$ so that the upper limit becomes $b_i+\Delta$, then $\mbox{E}[\omega|\Omega_i]$ will increase if $\Delta>0$, or will decrease if $\Delta<0$. This immediately follows from the definition of $\mbox{E}[\omega|\Omega_i]$. Therefore, the breakpoint between segment $i$ and segment $i+1$, which occurs at $\mbox{E}[\omega|\Omega_i]$, will move accordingly. However, the slope of segment $i+1$, which we recall is equal to $G_\omega(b_{i+1})$, depends uniquely on the upper limit of the region $\Omega_{i+1}$, $b_{i+1}$, and is not affected by a change in the upper limit of region $\Omega_i$. Therefore, the error at the breakpoint between segment $i$ and segment $i+1$ will decrease if $\Delta>0$, or will increase if $\Delta<0$.

Now, assume that $\Omega_1,\ldots,\Omega_N$ is an optimal partition of the support $\Omega$ of $\omega$ and approximation errors at breakpoints are not all equal. Furthermore, assume that the maximum approximation error occurs at breakpoint $i$. By  increasing the size of the region $\Omega_i$, i.e. by setting the upper limit to $b_i+\Delta$, where $\Delta>0$, it is possible to decrease the maximum error until it becomes equal to the error at breakpoint $k$, where $k\in\{1,\ldots,i-1\}$. The procedure can be repeated until all approximation errors are equal.

Second, we show that ($\leftarrow$) if approximation errors at breakpoints are all equal, then $\Omega_1,\ldots,\Omega_N$ is an optimal partition of the support $\Omega$ of $\omega$ under a minimax strategy. 

If approximation errors at breakpoints are all equal and we change the size of region $\Omega_i$ by setting the upper limit to $b_i+\Delta$, where $\Delta>0$, then the approximation error at breakpoint $i-1$ will increase; conversely, if $\Delta<0$, then the approximation error at breakpoint $i+1$ will increase.
\end{proof}

By using this last result it is possible to derive a set of equations that can be solved for computing an optimal partitioning.
Let us consider the error $e_i$ at breakpoint $i$, this can be expressed as 
\[e_i=\widehat{\mathcal{L}}(\mbox{E}[\omega|\Omega_i],\omega)-\widehat{\mathcal{L}}^{i}_{lb}(\mbox{E}[\omega|\Omega_i],\omega),\]
where 
%\[\mbox{E}[\omega|\Omega_i]=\frac{1}{p_i}\int_{\Omega_i} t g_{\omega}(t)\,dt,\]
$\Omega_i=[a_i,b_i]$.
Since we have $N$ breakpoints to check, we must solve a system comprising the following $N-1$ equations 
\[e_1=e_i~~~\mbox{for }i=2,\ldots,N.\]
under the following restrictions
\[
\begin{array}{lll}
a_1&=-\infty\\
b_N&=\infty\\
a_i&\leq b_i&\mbox{for }i=1,\ldots,N\\
b_i&=a_{i+1}&\mbox{for }i=1,\ldots,N-1
\end{array}
\]
The system therefore involves $N-1$ variables, each of which identifies the boundary between two disjoint regions $\Omega_i$ and $\Omega_{i+1}$. 

\begin{thm}\label{thm:approx_err_symmetric}
Assume that the probability density function of $\omega$ is symmetric about a mean value $\tilde{\omega}$. Then, under a minimax strategy, if $\Omega_1,\ldots,\Omega_N$ is an optimal partition of the support $\Omega$ of $\omega$, breakpoints will be symmetric about $\tilde{\omega}$.
\end{thm}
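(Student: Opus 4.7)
The plan is to combine a reflection-symmetry argument with the equal-error characterisation from Theorem \ref{thm:approx_err_equal} and a uniqueness observation for the resulting system of equations. Preliminarily, I would reduce to the centered case by the substitution $\eta=\omega-\tilde{\omega}$: the density of $\eta$ is symmetric about $0$, and showing that every optimal partition of the support of $\eta$ has breakpoints symmetric about $0$ immediately yields the claim for $\omega$ by translation.

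Given an optimal partition $\Omega_1,\ldots,\Omega_N$ for $\eta$ with boundaries $-\infty=a_1<b_1<\cdots<b_{N-1}<b_N=\infty$ and conditional means $c_i=\mbox{E}[\eta\mid\Omega_i]$, I would form the reflected partition $\Omega_i'=-\Omega_{N+1-i}$. Using the symmetry of the density of $\eta$, the reflected partition has $p_i'=p_{N+1-i}$ and $c_i'=-c_{N+1-i}$, which remain correctly ordered. The key step is to verify that the approximation error $e_i'$ at the $i$-th breakpoint of the reflected partition satisfies $e_i'=e_{N+1-i}$. A convenient route is to note that with $\tilde{\eta}=0$ Lemma \ref{thm:relationship_fol_com_1} specialises to $\mathcal{L}(x,\eta)=\widehat{\mathcal{L}}(x,\eta)-x$, and the analogous identity $\mathcal{L}_{lb}(x,\eta)=\widehat{\mathcal{L}}_{lb}(x,\eta)-x$ holds between the Jensen lower bounds from any common partition (since $\sum_i p_i c_i=\mbox{E}[\eta]=0$). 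Hence the pointwise errors of $\mathcal{L}$ and $\widehat{\mathcal{L}}$ under a common partition coincide. Combining this with Lemma \ref{thm:fol_symm} ($\mathcal{L}(x,\eta)=\widehat{\mathcal{L}}(-x,\eta)$) and a direct computation identifying the reflected lower bound evaluated at $-c_{N+1-i}$ with $\mathcal{L}_{lb}(c_{N+1-i},\eta)$ of the original partition then yields $e_i'=e_{N+1-i}$.

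By Theorem \ref{thm:approx_err_equal} the original errors satisfy $e_1=\cdots=e_N$, and therefore $e_1'=\cdots=e_N'$ as well, making the reflected partition optimal too. I would conclude by appealing to uniqueness of the minimax optimum: the monotonicity observations established inside the proof of Theorem \ref{thm:approx_err_equal} (a perturbation of $b_i$ strictly changes the slope $G_\eta(b_i)$ and shifts the relevant breakpoint toward or away from the tangent point of the adjacent segment) imply that the errors depend strictly monotonically on the adjacent boundaries, and hence that the coupled system of $N-1$ equal-error equations admits a unique ordered solution on $\{b_1<\cdots<b_{N-1}\}$. Uniqueness then forces the reflected partition to coincide with the original, giving $c_i=-c_{N+1-i}$, i.e.\ breakpoints symmetric about $0$; undoing the centering yields symmetry about $\tilde{\omega}$.

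The hard part will be a rigorous justification of uniqueness. The strict per-coordinate monotonicity of each $e_i$ in the two adjacent boundaries is already available from the proof of Theorem \ref{thm:approx_err_equal}, but upgrading this to uniqueness of the joint solution to the $N-1$ coupled equations requires either an induction on $N$ that propagates the monotonicity from the outermost breakpoints inward, a formal implicit-function argument on the admissible open polytope, or a strict quasi-convexity argument for the minimax value viewed as a function of $(b_1,\ldots,b_{N-1})$. Any of these routes should close the argument without additional structural assumptions on the symmetric density.
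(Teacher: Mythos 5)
Your overall route is the one the paper itself intends: the paper's entire proof of this theorem is the single sentence ``This follows from Lemma \ref{thm:fol_automorphism} and Theorem \ref{thm:approx_err_equal}'', i.e.\ precisely the reflection--plus--equal-errors argument you spell out. Your reflection computation is correct: after centring at $\tilde{\omega}=0$, the identity $\mathcal{L}_{lb}(x,\eta)=\widehat{\mathcal{L}}_{lb}(x,\eta)-x$ (valid because $\sum_i p_i c_i=\mbox{E}[\eta]=0$), combined with Lemma \ref{thm:fol_symm}, does yield $e_i'=e_{N+1-i}$, so the set of equal-error (hence, by Theorem \ref{thm:approx_err_equal}, optimal) partitions is closed under reflection. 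But you are right that this alone does not prove the statement: an asymmetric optimum would simply come paired with its distinct mirror image. Uniqueness of the equal-error partition is genuinely needed, and you have flagged it rather than proved it. That is the one real gap in your write-up --- though it is worth noting that the paper's one-line proof silently has exactly the same gap.

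The gap closes along the first route you list, and more easily than you suggest. All optimal partitions achieve the same minimax value $e^*$ and, by Theorem \ref{thm:approx_err_equal}, have all breakpoint errors equal to $e^*$; so it suffices to show that for a fixed common error $e$ there is at most one partition with $e_1=\cdots=e_{N-1}=e$, by propagating from the left. Since the zeroth segment is $y=0$, $e_1=\widehat{\mathcal{L}}(\mbox{E}[\omega|\Omega_1],\omega)$, and both $\mbox{E}[\omega|\Omega_1]$ (as a function of $b_1$) and $\widehat{\mathcal{L}}$ (since $\frac{d}{dx}\widehat{\mathcal{L}}=G_\omega(x)>0$) are strictly increasing, so $e_1=e$ pins down $b_1$. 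Inductively, with $b_{i-1}$ fixed, the error at breakpoint $i$ equals the gap between $\widehat{\mathcal{L}}$ and its tangent at $b_{i-1}$ (Lemma \ref{lem:tangent}) evaluated at $\mbox{E}[\omega|\Omega_i]$; that gap has derivative $G_\omega(x)-G_\omega(b_{i-1})>0$ for $x>b_{i-1}$, and $\mbox{E}[\omega|\Omega_i]$ is strictly increasing in $b_i$, so $e_i=e$ pins down $b_i$ from $b_{i-1}$. Hence the optimal partition is unique, coincides with its own reflection, and is therefore symmetric. With that paragraph added your proof is complete and is, in substance, the fully detailed version of the paper's citation-style proof.
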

\begin{proof}
This follows from Lemma \ref{thm:fol_automorphism} and Theorem \ref{thm:approx_err_equal}.
\end{proof}

In this case, by exploiting the symmetry of the piecewise linear approximation, an optimal partitioning can be derived by solving a smaller system comprising $\lceil N/2 \rceil$ equations, where $N$ is the number of regions $\Omega_i$ and $\lceil x \rceil$ rounds $x$ to the next integer value.

Unfortunately, equations in the above system are nonlinear and do not admit a closed form solution in the general case. 

\subsubsection{Normal distribution}

We will next discuss the system of equations that leads to an optimal partitioning for the case of a standard Normal random variable $Z$. This partitioning leads to a piecewise linear approximation that is, in fact, easily extended to the general case of a normally distributed variable $\zeta$ with mean $\mu$ and standard deviation $\sigma$ via Lemma \ref{lem:folf_std_norm}. This equation suggests that the error of this approximation is independent of $\mu$ and proportional to $\sigma$.

Consider a partitioning for the support $\Omega$ of $Z$ into $N$ adjacent regions $\Omega_i=[a_i,b_i]$, where $i=1,\ldots,N$.
From Theorem \ref{thm:approx_err_symmetric}, if $N$ is odd, then $b_{\lceil N/2\rceil}=0$ and $b_{i}=-b_{N+1-i}$, if $N$ is even, then $b_{i}=-b_{N+1-i}$. We shall use Lemma \ref{thm:compl_fol_closed} for expressing $\widehat{\mathcal{L}}(x,Z)$.
Then, by observing that
\[\int_{a_i}^{b_i} t \phi(t)\,dt=\phi(a_i)-\phi(b_i)\]
and that $p_1+p_2+\ldots+p_i=\Phi(b_i)$, we rewrite
\begin{align}
\widehat{\mathcal{L}}^i_{lb}(\mbox{E}[Z|\Omega_i],Z)	&=\Phi(b_i)\mbox{E}[Z|\Omega_i]-\sum_{k=1}^i(\phi(a_i)-\phi(b_i))\\
							&=\Phi(b_i)\mbox{E}[Z|\Omega_i]-(\phi(a_1)-\phi(b_i))\\
							&=\Phi(b_i)\mbox{E}[Z|\Omega_i]+\phi(b_i)
\end{align}

%We shall also approximate $\phi(x)$ with the following Maclaurin series
%\[\phi(x)=\frac{1}{\sqrt{2\pi}}e^{-\frac{1}{2}x^2}=\frac{1}{\sqrt{2\pi}}\sum_{n=0}^{\infty}\frac{(-\frac{1}{2}x^2)^n}{n!}\]
%We will express $\Phi(x)$ by using the following approximation 
%http://www.isrt.ac.bd/sites/default/files/jsrissues/v41n1/v41n1p59.pdf
%\cite{zelen64}
%\[\Phi(x)=1-\frac{1}{2}\left(\sum_{i=0}^4 a_i x^i\right)^{-4},~~~a=[1.0,0.196854,0.115194,0.000344,0.019527]\]
%which was shown to have a maximum error of less than 0.00025. 
%\[\Phi(x)\approx0.5+\frac{1}{\sqrt{2\pi}}\sum_{n=0}^{\infty}\frac{(-1)^nx^{2n+1}}{2^n(2n+1)n!}\]
To express the conditional expectation $\mbox{E}[Z|\Omega_i]$
%\[\mbox{E}[\omega|\Omega_i]=\frac{1}{p_i}\int_{\Omega_i} t \phi(t)\,dt\]
we proceed as follows: let $p_i=\Phi(b_i)-\Phi(a_i)$,
%; furthermore, by observing that
%\[\int_{a_i}^{b_i} t \phi(t)\,dt=\left[\frac{1}{\sqrt{2\pi}}\sum_{n=0}^{\infty}\frac{(-1)^nx^{3n+1}}{2^n(3n+1)n!}\right]_{a_i}^{b_i}\]
%\[\int_{a_i}^{b_i} t \phi(t)\,dt=\phi(a_i)-\phi(b_i)\]
it follows
\[\mbox{E}[Z|\Omega_i]=\frac{\phi(a_i)-\phi(b_i)}{\Phi(b_i)-\Phi(a_i)}.\]

To solve the above system of non-linear equations we will exploit the close connections between finding a local minimum and solving a set of nonlinear equations. In particular, we will use the Gauss-Newton method to find a partition $\Omega_1,\ldots,\Omega_N$ of the support of $Z$ that minimises the following sum of squares
\[\sum_{k=2}^{N}(e_1-e_k)^2\]

This minimisation problem can be solved by software packages such as Mathematica (see \texttt{NMinimize}). 
%A Mathematica notebook code is provided in Appendix I.

\subsubsection{Numerical examples}

The classical Jensen's bound for the complementary first order loss function of a standard Normal random variable $Z$ is show in Fig. \ref{fig:piecewise-2}. This is obtained by considering a degenerate partition of the support of $Z$ comprising only a single region $\Omega_1=[-\infty,\infty]$. In practice, we simply replace $Z$ by its expected value, i.e. zero. Therefore we simply have $\widehat{\mathcal{L}}_{lb}=\max(x,0)$. The maximum error of this piecewise linear approximation occurs for $x=0$ and it is equal to $1/\sqrt{2\pi}$.
\begin{figure}[h!]
\centering
\includegraphics[type=eps,ext=.eps,read=.eps,width=0.8\columnwidth]{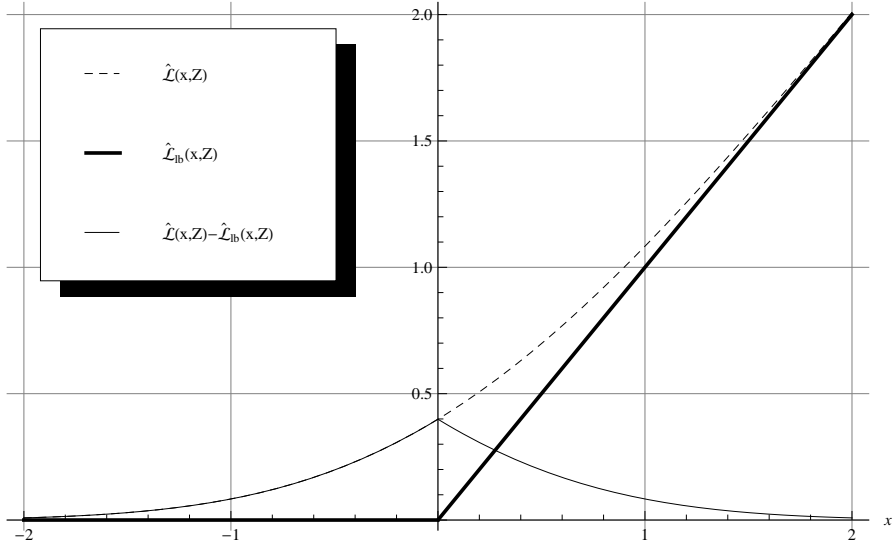}
\caption{classical Jensen's bound for $\widehat{\mathcal{L}}(x,Z)$}
\label{fig:piecewise-2}
\end{figure}   

%Another trivial case is the one in which we partition the support of $Z$ into two regions $\Omega_1=[-\infty,0]$ and $\Omega_2=[0,\infty]$. This partitioning is easily seen to be optimal as a consequence of Theorem \ref{thm:approx_err_symmetric}, which prescribed that an optimal partition should be symmetric. The maximum error is 0.120656 and it is observed at $x=\pm0.797885$. In Fig. \ref{fig:piecewise-3} we plot $\widehat{\mathcal{L}}(x,Z)$, $\widehat{\mathcal{L}}_{lb}(x,Z)$ and the approximation error $\widehat{\mathcal{L}}(x,Z)-\widehat{\mathcal{L}}_{lb}(x,Z)$.
%\begin{figure}[h!]
%\centering
%\includegraphics[type=eps,ext=.eps,read=.eps,width=0.8\columnwidth]{piecewise-3}
%\caption{three-segment piecewise Jensen's bound for $\widehat{\mathcal{L}}(x,Z)$}
%\label{fig:piecewise-3}
%\end{figure}   

%The first interesting instance occurs when we split the support of $Z$ into three regions (Fig. \ref{fig:piecewise-4}). The solution to the system of nonlinear equations prescribes to split $\Omega$ at $b_1=-0.559725$, $b_2=0.559725$. The maximum error is 0.0578441 and it is observed at $x\in\{\pm1.18505,0\}$.
%\begin{figure}[h!]
%\centering
%\includegraphics[type=eps,ext=.eps,read=.eps,width=0.8\columnwidth]{piecewise-4}
%\caption{four-segment piecewise Jensen's bound for $\widehat{\mathcal{L}}(x,Z)$}
%\label{fig:piecewise-4}
%\end{figure}   

If we split the support of $Z$ into four regions (Fig. \ref{fig:piecewise-5}), the solution to the system of nonlinear equations prescribes to split $\Omega$ at $b_1=-0.886942$, $b_2=0$, $b_3=0.886942$. The maximum error is 0.0339052 and it is observed at $x\in\{\pm1.43535, \pm0.415223\}$.
\begin{figure}[h!]
\centering
\includegraphics[type=eps,ext=.eps,read=.eps,width=0.8\columnwidth]{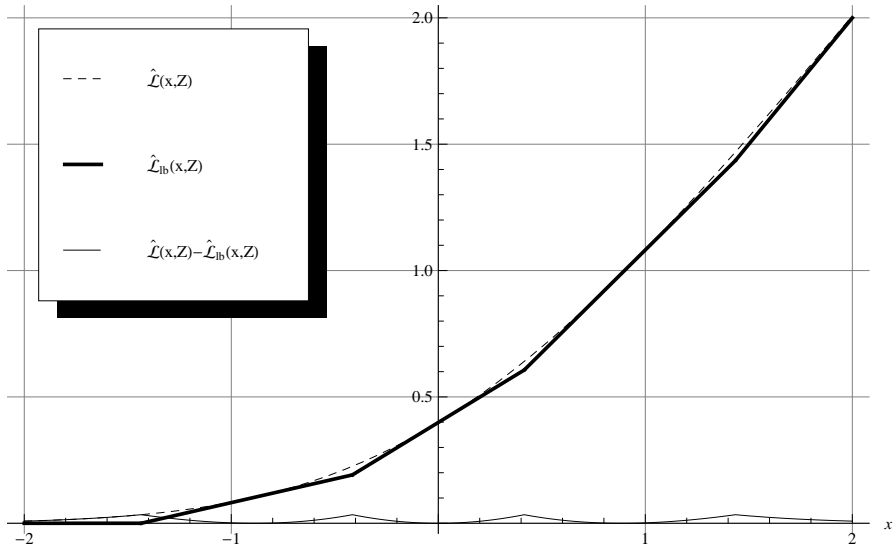}
\caption{five-segment piecewise Jensen's bound for $\widehat{\mathcal{L}}(x,Z)$}
\label{fig:piecewise-5}
\end{figure}   

In Table \ref{tab:parameters} we report parameters of $\widehat{\mathcal{L}}_{lb}(x,Z)$ with up to eleven segments. In Fig. \ref{fig:error} we present the approximation error of $\widehat{\mathcal{L}}_{lb}(x,Z)$ with up to eleven segments.
\begin{figure}[h!]
\centering
\includegraphics[type=eps,ext=.eps,read=.eps,width=0.8\columnwidth]{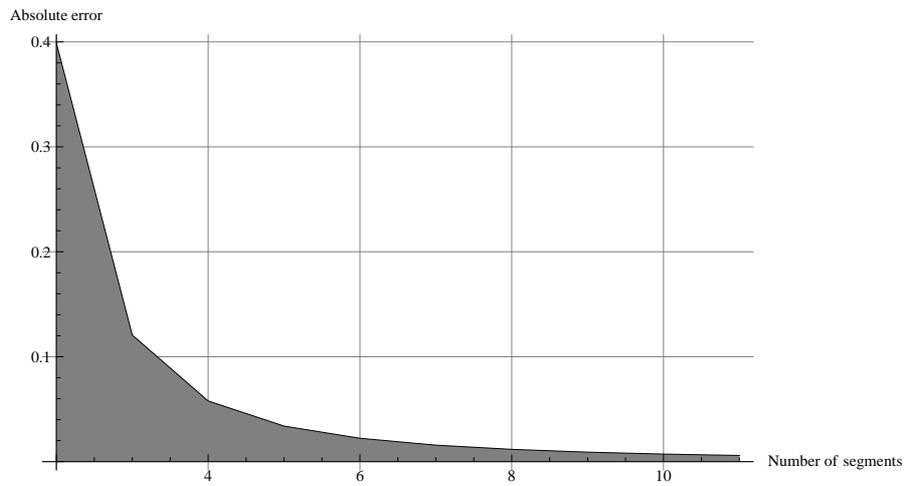}
\caption{approximation error of $\widehat{\mathcal{L}}_{lb}(x,Z)$ with up to eleven segments}
\label{fig:error}
\end{figure}   

\begin{landscape}
\begin{table}[htdp]
\tiny
\begin{center}
\begin{tabular}{l|l|lllllllllll}
&&\multicolumn{11}{c}{Piecewise linear approximation parameters}\\
Segments&Error&$i$&1&2&3&4&5&6&7&8&9&10\\
\hline
\multirow{4}{*}{2}&
\multirow{4}{*}{0.398942}
%&$i$&1\\
&$b_i$&$\infty$\\
&&$p_i$&1\\
&&$\mbox{E}[\omega|\Omega_i]$&0\\
\hline
\multirow{4}{*}{3}&
\multirow{4}{*}{0.120656}
%&$i$&1&2\\
&$b_i$&0&$\infty$\\
&&$p_i$&0.5&0.5\\
&&$\mbox{E}[\omega|\Omega_i]$&$-0.797885$&$0.797885$\\
\hline
\multirow{4}{*}{4}&
\multirow{4}{*}{0.0578441}
%&$i$&1&2&3\\
&$b_i$&$-0.559725$&$0.559725$&$\infty$\\
&&$p_i$&0.287833&0.424333&0.287833\\
&&$\mbox{E}[\omega|\Omega_i]$&$-1.18505$&$0$&$1.18505$\\
\hline
\multirow{4}{*}{5}&
\multirow{4}{*}{0.0339052}
%&$i$&1&2&3&4\\
&$b_i$&$-0.886942$&$0$&$0.886942$&$\infty$\\
&&$p_i$&$0.187555$&$0.312445$&$0.312445$&$0.187555$\\
&&$\mbox{E}[\omega|\Omega_i]$&$-1.43535$&$-0.415223$&$0.415223$&$1.43535$\\
\hline
\multirow{4}{*}{6}&
\multirow{4}{*}{0.0222709}
%&$i$&1&2&3&4&5\\
&$b_i$&$-1.11507$&$-0.33895$&$0.33895$&$1.11507$&$\infty$\\
&&$p_i$&$0.132411$&$0.234913$&$0.265353$&$0.234913$&$0.132411$\\
&&$\mbox{E}[\omega|\Omega_i]$&$-1.61805$&$-0.691424$&$0$&$0.691424$&$1.61805$\\
\hline
\multirow{4}{*}{7}&
\multirow{4}{*}{0.0157461}
%&$i$&1&2&3&4&5&6\\
&$b_i$&$-1.28855$&$-0.579834$&$0$&$0.579834$&$1.28855$&$\infty$\\
&&$p_i$&$0.0987769$&$0.182236$&$0.218987$&$0.218987$&$0.182236$&$0.0987769$\\
&&$\mbox{E}[\omega|\Omega_i]$&$-1.7608$&$-0.896011$&$-0.281889$&$0.281889$&$0.896011$&$1.7608$\\
\hline
\multirow{4}{*}{8}&
\multirow{4}{*}{0.0117218}
%&$i$&1&2&3&4&5&6&7\\
&$b_i$&$-1.42763$&$-0.765185$&$-0.244223$&$0.244223$&$0.765185$&$1.42763$&$\infty$\\
&&$p_i$&$0.0766989$&$0.145382$&$0.181448$&$0.192942$&$0.181448$&$0.145382$&$0.0766989$\\
&&$\mbox{E}[\omega|\Omega_i]$&$-1.87735$&$-1.05723$&$-0.493405$&$0$&$0.493405$&$1.05723$&$1.87735$\\
\hline
\multirow{4}{*}{9}&
\multirow{4}{*}{0.00906529}
%&$i$&1&2&3&4&5&6&7&8\\
&$b_i$&$-1.54317$&$-0.914924$&$-0.433939$&$0$&$0.433939$&$0.914924$&$1.54317$&$\infty$\\
&&$p_i$&$0.0613946$&$0.118721$&$0.152051$&$0.167834$&$0.167834$&$0.152051$&$0.118721$&$0.0613946$\\
&&$\mbox{E}[\omega|\Omega_i]$&$-1.97547$&$-1.18953$&$-0.661552$&$-0.213587$&$0.213587$&$0.661552$&$1.18953$&$1.97547$\\
\hline
\multirow{4}{*}{10}&
\multirow{4}{*}{0.00721992}
%&$i$&1&2&3&4&5&6&7&8&9\\
&$b_i$&$-1.64166$&$-1.03998$&$-0.58826$&$-0.19112$&$0.19112$&$0.58826$&$1.03998$&$1.64166$&$\infty$\\
&&$p_i$&$0.0503306$&$0.0988444$&$0.129004$&$0.146037$&$0.151568$&$0.146037$&$0.129004$&$0.0988444$&$0.0503306$\\
&&$\mbox{E}[\omega|\Omega_i]$&$-2.05996$&$-1.30127$&$-0.8004$&$-0.384597$&$0.$&$0.384597$&$0.8004$&$1.30127$&$2.05996$\\
\hline
\multirow{4}{*}{11}&
\multirow{4}{*}{0.00588597}
%&$i$&1&2&3&4&5&6&7&8&9&10\\
&$b_i$&$-1.72725$&$-1.14697$&$-0.717801$&$-0.347462$&$0.$&$0.347462$&$0.717801$&$1.14697$&$1.72725$&$\infty$\\
&&$p_i$&$0.0420611$&$0.0836356$&$0.110743$&$0.127682$&$0.135878$&$0.135878$&$0.127682$&$0.110743$&$0.0836356$&$0.0420611$\\
&&$\mbox{E}[\omega|\Omega_i]$&$-2.13399$&$-1.39768$&$-0.9182$&$-0.526575$&$-0.17199$&$0.17199$&$0.526575$&$0.9182$&$1.39768$&$2.13399$
\end{tabular}
\end{center}
\caption{parameters of $\widehat{\mathcal{L}}_{lb}(x,Z)$ with up to eleven segments}
\label{tab:parameters}
\end{table}%
\end{landscape}

In Fig. \ref{fig:piecewise-5-scaled} we exploited Lemma \ref{lem:folf_std_norm} to obtain
%, from the approximation presented in Fig. \ref{fig:piecewise-5}, 
the five-segment piecewise Jensen's bound for $\widehat{\mathcal{L}}(x,\zeta)$, where $\zeta$ is a normally distributed random variable with mean $\mu=20$ and standard deviation $\sigma=5$. The maximum error is $\sigma0.0339052$ and it is observed at $x\in\{\sigma(\pm1.43535)+\mu, \sigma(\pm0.415223)+\mu\}$.
\begin{figure}[h!]
\centering
\includegraphics[type=eps,ext=.eps,read=.eps,width=0.8\columnwidth]{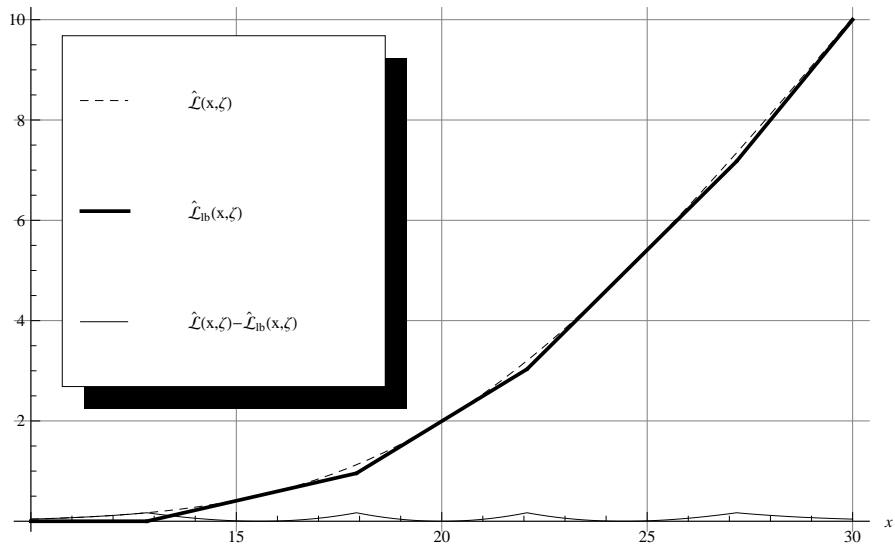}
\caption{five-segment piecewise Jensen's bound for $\widehat{\mathcal{L}}(x,\zeta)$, where $\mu=20$ and $\sigma=5$}
\label{fig:piecewise-5-scaled}
\end{figure}   

\section{An approximate piecewise linear upper bound for the standard normal first order loss function}\label{sec:ub}

In this section we introduce a simple bounding technique that exploits convexity of the (complementary) first order loss function to derive a piecewise linear upper bound. 

\subsection{A piecewise linear upper bound}

Without loss of generality we shall introduce the bound for the complementary first order loss function. Consider a random variable $\omega$ with support $\Omega$. From Lemma \ref{lem:convexity_loss_function}, $\widehat{\mathcal{L}}(x,\omega)$ is convex in $x$ regardless of the distribution of $\omega$. Given an interval $[a,b]\in\mathbb{R}$, it is possible to construct an upper bound by exploiting the very same definition of convexity, that is by constructing a straight line $\widehat{\mathcal{L}}_{ub}(x,\omega)$ between the two points $(a,\widehat{\mathcal{L}}(a,\omega))$ and $(b,\widehat{\mathcal{L}}(b,\omega))$. The slope ($\alpha$) and the intercept ($\beta$) of this line can be easily computed
\[\alpha=\frac{\widehat{\mathcal{L}}(b,\omega)-\widehat{\mathcal{L}}(a,\omega)}{b-a},~~~~~\beta=\frac{b \widehat{\mathcal{L}}(a,\omega)- a \widehat{\mathcal{L}}(b,\omega)}{b-a}.\]
The upper bound is then 
\begin{align}
\widehat{\mathcal{L}}_{ub}(x,\omega)&=\alpha x+\beta&a\leq x\leq b\\
							&=\widehat{\mathcal{L}}(a,\omega) \frac{b - x}{b-a}+\widehat{\mathcal{L}}(b,\omega) \frac{x - a}{b-a}&a\leq x\leq b
\end{align}
We can improve the quality of this bound by partitioning the domain $\mathbb{R}$ of $\widehat{\mathcal{L}}(x,\omega)$ into $N$ disjoint regions $\mathcal{D}_i=[a_i,b_i]$, $i=1,\ldots,N$. The selected regions must be all compact and adjacent. Because of the convexity of $\widehat{\mathcal{L}}(x,\omega)$ the bound can be then applied to each of these regions considered separately. 

However, since $\widehat{\mathcal{L}}(x,\omega)$ is defined over $\mathbb{R}$, it is not possible to guarantee a complete covering of the domain by using compact regions. We must therefore add two extreme regions $\mathcal{D}_0=[-\infty,a_1]$ and $\mathcal{D}_{N+1}=[a_{N+1}=b_N,\infty]$ to ensure the one obtained is indeed an upper bound for each $x\in\mathbb{R}$. By noting that
\[
\lim_{x\rightarrow_{-\infty}}\widehat{\mathcal{L}}(x,\omega)=0~~~\mbox{and}~~~\lim_{x\rightarrow_{\infty}}\widehat{\mathcal{L}}(x,\omega)=x
\]
it is easy to derive equations for the lines associated with these two extra regions. In particular, we associate with $\mathcal{D}_0$ a horizontal line with slope $\alpha=0$ and intercept $\beta=\widehat{\mathcal{L}}(a_1,\omega)$, and with $\mathcal{D}_{N+1}$ a line with slope $\alpha=1$ and intercept $\beta=\widehat{\mathcal{L}}(b_N,\omega)-b_N$.

Also in this case, we must then decide how to partition the domain $\mathbb{R}$ into $N+2$ intervals $\mathcal{D}_0,\ldots,\mathcal{D}_{N+1}$ to obtain a tight bound. Once more, the optimal partitioning strategy will depend on the probability distribution of the random variable $\omega$. 

\subsection{Minimax piecewise linear upper bound}

We discuss a minimax strategy for generating a piecewise linear upper bound of the (complementary) first order loss function $\widehat{\mathcal{L}}(x,\omega)$. In this strategy, we partition of the domain $\mathbb{R}$ of $x$ into a predefined number of regions $N+2$ in order to minimise the maximum approximation error. Note that, since this domain is not compact, one needs at least two regions to derive a piecewise linear upper bound.

Consider a random variable $\omega$ and the associated complementary first order loss function 
\[\widehat{\mathcal{L}}(x,\omega)=\mbox{E}[\max(x-\omega,0)];\]
assume that the domain $\mathbb{R}$ of $x$ is partitioned into $N+2$ disjoint adjacent subregions $\mathcal{D}_0,\ldots,\mathcal{D}_{N+1}$, where $\mathcal{D}_0=[-\infty,a_1]$, $\mathcal{D}_i=[a_i,b_i]$, for $i=1,\ldots,N$, and $\mathcal{D}_{N+1}=[b_N,\infty]$, and consider the following piecewise linear upper bound
\[
\widehat{\mathcal{L}}_{ub}(x,\omega)=\left\{
\begin{array}{ll}
\widehat{\mathcal{L}}(a_1,\omega)&x\in\mathcal{D}_0\\
\vdots\\
\widehat{\mathcal{L}}(a_i,\omega) \frac{b_i - x}{b_i-a_i}+\widehat{\mathcal{L}}(b_i,\omega) \frac{x - a_i}{b_i-a_i}&x\in\mathcal{D}_i\\
\vdots\\
x+\widehat{\mathcal{L}}(b_N,\omega)-b_N&x\in\mathcal{D}_{N+1}
\end{array}
\right.
\]
Let $\widehat{\mathcal{L}}^i_{ub}(x,\omega)$ be the linear segment of $\widehat{\mathcal{L}}_{ub}(x,\omega)$ over $\mathcal{D}_i$, for $i=0,\ldots,N+1$.
\begin{lem}\label{lem:max_error_ub}
Consider $\widehat{\mathcal{L}}^i_{ub}(x,\omega)$, where $i=1,\ldots,N$; the maximum approximation error between $\widehat{\mathcal{L}}^i_{ub}(x,\omega)$ and $\widehat{\mathcal{L}}(x,\omega)$ will be attained for \[\bar{x}_i=G^{-1}_\omega\left(\frac{\widehat{\mathcal{L}}(b_i,\omega)-\widehat{\mathcal{L}}(a_i,\omega)}{b_i-a_i}\right).\]
\end{lem}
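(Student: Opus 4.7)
The plan is to reduce the maximisation of the error to a simple first-order condition by exploiting the fact that the difference between a linear function and a convex function is concave, and then identifying the stationary point using the derivative formula for $\widehat{\mathcal{L}}$ established earlier.

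First I would introduce the error function
\[
e(x)\;:=\;\widehat{\mathcal{L}}^i_{ub}(x,\omega)-\widehat{\mathcal{L}}(x,\omega),\qquad x\in[a_i,b_i].
\]
By construction $\widehat{\mathcal{L}}^i_{ub}$ is the secant line through $(a_i,\widehat{\mathcal{L}}(a_i,\omega))$ and $(b_i,\widehat{\mathcal{L}}(b_i,\omega))$, so $e(a_i)=e(b_i)=0$. Moreover, since $\widehat{\mathcal{L}}^i_{ub}(x,\omega)$ is affine and $\widehat{\mathcal{L}}(x,\omega)$ is convex in $x$ (Lemma \ref{lem:convexity_loss_function}), the function $e$ is concave on $[a_i,b_i]$ and, by the very definition of convexity, nonnegative on this interval.

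Next I would differentiate. The slope of the secant is the constant
\[
\alpha\;=\;\frac{\widehat{\mathcal{L}}(b_i,\omega)-\widehat{\mathcal{L}}(a_i,\omega)}{b_i-a_i},
\]
while from the proof of Lemma \ref{lem:convexity_loss_function} we have $\tfrac{d}{dx}\widehat{\mathcal{L}}(x,\omega)=G_\omega(x)$. Hence
\[
e'(x)\;=\;\alpha-G_\omega(x).
\]
Because $e$ is concave, its maximum on $[a_i,b_i]$ is attained at any $\bar{x}_i$ where $e'(\bar{x}_i)=0$, i.e.\ where $G_\omega(\bar{x}_i)=\alpha$. Solving yields $\bar{x}_i=G^{-1}_\omega(\alpha)$, which is the claimed expression.

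The only subtle step is verifying that this critical point actually lies inside $[a_i,b_i]$. This follows from the mean value theorem applied to $\widehat{\mathcal{L}}$ on $[a_i,b_i]$: there exists $c\in(a_i,b_i)$ with $G_\omega(c)=\widehat{\mathcal{L}}'(c,\omega)=\alpha$, so $\alpha$ lies in the range of $G_\omega$ over $[a_i,b_i]$ and $G_\omega^{-1}(\alpha)\in[a_i,b_i]$. The main obstacle, though a minor one, is the well-definedness of $G_\omega^{-1}$: if $G_\omega$ fails to be strictly increasing on $[a_i,b_i]$ there is a whole interval of maximisers, but by concavity of $e$ any such point gives the same maximum error, so the statement remains correct under the convention that $G_\omega^{-1}(\alpha)$ denotes any element of the preimage.
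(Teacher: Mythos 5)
Your proof is correct and is essentially the paper's argument: both reduce to the condition that the slope of the loss function, $\tfrac{d}{dx}\widehat{\mathcal{L}}(x,\omega)=G_\omega(x)$, must equal the secant slope $\alpha$, the paper phrasing this as finding the tangent line parallel to the $i$-th segment and you phrasing it as the first-order condition for the concave error $e(x)=\widehat{\mathcal{L}}^i_{ub}(x,\omega)-\widehat{\mathcal{L}}(x,\omega)$. Your version is in fact slightly more complete, since you justify explicitly (via concavity of $e$ and the mean value theorem) that the stationary point is the global maximiser on $[a_i,b_i]$ and that it lies in that interval, points the paper leaves implicit.
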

\begin{proof}
The idea here is to derive a line that is tangent to $\widehat{\mathcal{L}}(x,\omega)$ and that has a slope equal to that of the $i$-th linear segment of $\widehat{\mathcal{L}}_{ub}(x,\omega)$. We have already discussed in Lemma \ref{lem:tangent} that the equation of the tangent to $\widehat{\mathcal{L}}(x,\omega)$ at a given point $\bar{x}_i$ is 
\[y=\widehat{\mathcal{L}}(\bar{x}_i,\omega)'(x-p)+\widehat{\mathcal{L}}(\bar{x}_i,\omega)\]
that is
\[\widehat{\mathcal{L}}^i_{lb}(x,\omega)=G_{\omega}(\bar{x}_i)(x-p)+\int_{-\infty}^{\bar{x}_i} G_{\omega}(t)\;dt.\]
The slope $G_{\omega}(\bar{x}_i)$ only depends on $\bar{x}_i$. To find a tangent with a slope equal to that of the $i$-th linear segment of $\widehat{\mathcal{L}}_{ub}(x,\omega)$, we simply let \[G_{\omega}(\bar{x}_i)=\frac{\widehat{\mathcal{L}}(b_i,\omega)-\widehat{\mathcal{L}}(a_i,\omega)}{b_i-a_i}\] and invert the cumulative distribution function.
\end{proof}
Note that the maximum approximation error for the linear segment over $\mathcal{D}_0$ is $\widehat{\mathcal{L}}(a_1,\omega)$ and the maximum approximation error for the linear segment over $\mathcal{D}_{N+1}$ is $\widehat{\mathcal{L}}(b_N,\omega)-b_N$. This can be inferred from the fact that $\widehat{\mathcal{L}}(x,\omega)$ monotonically approaches 0 for $x\rightarrow-\infty$ and $x$ for $x\rightarrow\infty$.
\begin{thm}\label{thm:approx_err_equal_ub}
$\mathcal{D}_0,\ldots,\mathcal{D}_{N+1}$ is an optimal partition under a minimax strategy, if and only if the maximum approximation error between $\widehat{\mathcal{L}}(x,\omega)$ and each linear segment of $\widehat{\mathcal{L}}_{ub}(x,\omega)$ is the same.
\end{thm}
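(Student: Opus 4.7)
My plan is to mirror the proof of Theorem \ref{thm:approx_err_equal}, with the $N$ breakpoint errors of the lower-bounding case replaced by the $N+2$ regional errors $e_0,\ldots,e_{N+1}$ attached to $\mathcal{D}_0,\ldots,\mathcal{D}_{N+1}$. Write $t_1<t_2<\cdots<t_{N+1}$ for the breakpoints, so $\mathcal{D}_0=[-\infty,t_1]$, $\mathcal{D}_i=[t_i,t_{i+1}]$ for $1\leq i\leq N$, and $\mathcal{D}_{N+1}=[t_{N+1},\infty]$. The proof rests on a monotonicity lemma describing how each $e_i$ responds to movements of its endpoints.

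The monotonicity lemma I would establish has three parts. First, $e_0(t_1)=\widehat{\mathcal{L}}(t_1,\omega)$ is strictly increasing in $t_1$, since its derivative equals $G_\omega(t_1)>0$ on the support. Second, $e_{N+1}(t_{N+1})=\widehat{\mathcal{L}}(t_{N+1},\omega)-t_{N+1}$ is strictly decreasing in $t_{N+1}$, since its derivative equals $G_\omega(t_{N+1})-1<0$. Third, for $1\leq i\leq N$, $e_i(t_i,t_{i+1})$ is strictly decreasing in $t_i$ and strictly increasing in $t_{i+1}$: widening $[a,b]$ to $[a,b']$ with $b'>b$ produces, by convexity (Lemma \ref{lem:convexity_loss_function}), a new chord lying pointwise above the original chord on $[a,b]$, hence strictly enlarging the maximum chord-to-function gap; the symmetric argument handles the left endpoint. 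All $e_i$ depend continuously on the breakpoints.

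For the ($\Rightarrow$) direction I would adapt the equalisation argument of Theorem \ref{thm:approx_err_equal}. Suppose the partition is optimal yet the errors are not all equal, and let $M=\max_i e_i$ with $A=\{i:e_i=M\}\subsetneq\{0,\ldots,N+1\}$. Since $A$ is a proper subset of a contiguous index range, there exists $k\in A$ with a neighbour $k'\notin A$. Moving the shared breakpoint in the direction that shrinks $\mathcal{D}_k$ strictly decreases $e_k$ and strictly increases $e_{k'}$ by monotonicity; for a sufficiently small perturbation $e_{k'}$ remains below $M$ by continuity, while no other error is affected. Hence $|A|$ drops by one, and iterating at most $N+2$ times yields a partition in which every error is strictly below $M$, contradicting optimality.

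For the converse, suppose all $e_i$ equal some common value $M^*$ and, for contradiction, that some partition $\{t_i^{\mathrm{new}}\}$ achieves a strictly smaller maximum error. Then $e_0^{\mathrm{new}}<M^*$ and monotonicity of $e_0$ force $t_1^{\mathrm{new}}<t_1^{\mathrm{old}}$. By induction, if $t_i^{\mathrm{new}}<t_i^{\mathrm{old}}$ then $e_i(t_i^{\mathrm{new}},t_{i+1}^{\mathrm{new}})<M^*=e_i(t_i^{\mathrm{old}},t_{i+1}^{\mathrm{old}})$ combined with strict monotonicity of $e_i$ in each argument forces $t_{i+1}^{\mathrm{new}}<t_{i+1}^{\mathrm{old}}$. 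At $i=N$ this yields $t_{N+1}^{\mathrm{new}}<t_{N+1}^{\mathrm{old}}$, but $e_{N+1}^{\mathrm{new}}<M^*$ and monotonicity of $e_{N+1}$ require $t_{N+1}^{\mathrm{new}}>t_{N+1}^{\mathrm{old}}$, a contradiction. The main obstacle is proving strict monotonicity of the interior $e_i$; this reduces to the chord-comparison argument sketched above, valid whenever $g_\omega$ is positive over the relevant range, so everything else is elementary continuity plus induction.
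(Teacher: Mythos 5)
Your proposal is correct, but it takes a genuinely different route from the paper. The paper proves this theorem by reduction to the lower-bound case: it observes (via Lemma \ref{lem:tangent}, Lemma \ref{lem:max_error_ub} and Theorem \ref{thm:approx_err_equal}) that shifting the optimal Jensen lower bound upward by its maximum error yields a piecewise linear upper bound whose segments touch $\widehat{\mathcal{L}}$ at the points $\mbox{E}[Z|\Omega_i]$ and whose maximum errors coincide at the former tangency points $a_1,b_1,\ldots,b_N$, and then argues informally that perturbing any touching point can only increase the maximum error. You instead work directly with the per-region chord errors $e_0,\ldots,e_{N+1}$, establish their strict monotonicity in the breakpoints (increasing in the right endpoint, decreasing in the left, with the two tail regions handled by the explicit derivatives $G_\omega(t_1)$ and $G_\omega(t_{N+1})-1$), and then run an equalisation argument for necessity and an interval-chasing induction for sufficiency. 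Your necessity argument is the same in spirit as the paper's proof of Theorem \ref{thm:approx_err_equal}; your sufficiency argument is actually stronger than what the paper offers, since the interval-chasing induction rules out \emph{arbitrary} competing partitions rather than only single-breakpoint perturbations. The only caveats are minor: strictness of the interior monotonicity needs $g_\omega>0$ on the relevant range (which holds under the paper's standing assumption that $g_\omega$ maps $\mathbb{R}$ to $(0,1)$, and in particular for the normal case), and the equalisation step should note that perturbations must be kept small enough that errors previously pushed below the maximum $M$ are not pushed back up to it --- both points you essentially acknowledge. What the paper's approach buys is the practical observation that the optimal upper bound is obtained for free from the lower-bound computation (no new system of equations to solve); what your approach buys is a self-contained and more rigorous proof of the equivalence itself.
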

\begin{proof}
The proof of this theorem can be obtained from Lemma \ref{lem:max_error_ub} and from Theorem \ref{thm:approx_err_equal}. In particular, the key insight needed to understand this result is the following. In Theorem \ref{thm:approx_err_equal} we showed that the approximation errors at breakpoints for the piecewise linear lower bound presented are all equal to each other and also equal to the maximum approximation error; furthermore, in Lemma \ref{lem:tangent} we showed that the $i$-th piecewise linear segment of this lower bound agrees with the original function at point $b_i$, where $\Omega_i=[a_i,b_i]$ is the $i$-th partition of the support of $Z$, for $i=1,\ldots,N$.  Since the first order loss function is convex and we know the maximum approximation error, by shifting up the piecewise linear lower bound by a value equal to the maximum approximation error we immediately obtain a piecewise linear upper bound comprising $N+1$ segments. This upper bound agrees with the original function at $\mbox{E}[Z|\Omega_i]$, for $i=1,\ldots,N$. The maximum approximation error will be attained at those points in which the lower bound was tangent to the original function, that is $a_1,b_1,b_2,\ldots,b_N$. By using a reasoning similar to the one developed for Theorem \ref{thm:approx_err_equal}, it is possible to show that, if we increase or decrease at least one $\mbox{E}[Z|\Omega_i]$, the maximum approximation error can only increase.
\end{proof}
By using this result it is possible to derive a set of equations that can be solved for computing an optimal partitioning.
Let us consider the maximum approximation error $e_i$ associated with the $i$-th linear segment of $\widehat{\mathcal{L}}_{ub}(x,\omega)$, this can be expressed as 
\[e_i=\widehat{\mathcal{L}}^{i}_{ub}(\bar{x}_i,\omega)-\widehat{\mathcal{L}}(\bar{x}_i,\omega),\]
where $i=1,\ldots,N$; furthermore $e_0=\widehat{\mathcal{L}}(a_1,\omega)$ and $e_{N+1}=\widehat{\mathcal{L}}(b_N,\omega)-b_N$.
Since we have $N+2$ segments to check, we must solve a system comprising the following $N+1$ equations 
\[e_0=e_i~~~\mbox{for }i=1,\ldots,N+1\]
under the following restrictions
\[
\begin{array}{lll}
a_i&\leq b_i&\mbox{for }i=1,\ldots,N\\
b_i&=a_{i+1}&\mbox{for }i=1,\ldots,N-1
\end{array}
\]
The system involves $N+1$ variables, each of which identifies the boundary between two disjoint adjacent regions $\mathcal{D}_i$ and $\mathcal{D}_{i+1}$. 
\begin{thm}\label{thm:approx_err_symmetric_ub}
Assume that the probability density function of $\omega$ is symmetric about a mean value $\tilde{\omega}$. Then, under a minimax strategy, if $\mathcal{D}_0,\ldots,\mathcal{D}_{N+1}$ is an optimal partition of the domain, breakpoints will be symmetric about the mean value $\tilde{\omega}$.
\end{thm}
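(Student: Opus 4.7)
My plan is to mirror the proof of Theorem \ref{thm:approx_err_symmetric}, combining the reflection identity of Lemma \ref{thm:fol_automorphism} with the equal-error characterisation of Theorem \ref{thm:approx_err_equal_ub}. The high-level strategy is: show that reflecting any partition about $\tilde{\omega}$ produces another partition whose segmental maximum errors match those of the original, deduce that the reflection of an optimal partition is itself optimal, and then conclude by a uniqueness argument that the optimal partition must be invariant under reflection.

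First, given a candidate partition with interior breakpoints $a_1<b_1=a_2<\ldots<b_N$, I would define the reflected partition by the breakpoints $2\tilde{\omega}-b_N<\ldots<2\tilde{\omega}-b_1<2\tilde{\omega}-a_1$ and denote by $\widehat{\mathcal{L}}'_{ub}(x,\omega)$ the piecewise linear upper bound it induces. A short segment-by-segment calculation, together with Lemma \ref{thm:fol_automorphism}, gives the identity
\[
\widehat{\mathcal{L}}'_{ub}(x,\omega)=\widehat{\mathcal{L}}_{ub}(2\tilde{\omega}-x,\omega)+(x-\tilde{\omega}).
\]
Indeed, on each bounded region $\mathcal{D}_i=[a_i,b_i]$ the bound is the secant through $(a_i,\widehat{\mathcal{L}}(a_i,\omega))$ and $(b_i,\widehat{\mathcal{L}}(b_i,\omega))$; reflecting the endpoints and applying Lemma \ref{thm:fol_automorphism} yields the secant through the reflected endpoints up to the linear correction $(x-\tilde{\omega})$, and the two extreme segments on $\mathcal{D}_0$ and $\mathcal{D}_{N+1}$ transform into each other in the same way.

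Second, applying Lemma \ref{thm:fol_automorphism} to $\widehat{\mathcal{L}}(x,\omega)$ itself, the pointwise approximation error of the reflected bound satisfies
\[
\widehat{\mathcal{L}}'_{ub}(x,\omega)-\widehat{\mathcal{L}}(x,\omega)=\widehat{\mathcal{L}}_{ub}(2\tilde{\omega}-x,\omega)-\widehat{\mathcal{L}}(2\tilde{\omega}-x,\omega),
\]
so the maximum error on each reflected segment equals the maximum error on the corresponding original segment. In particular, the reflected partition realises exactly the same minimax value as the original. Hence, if $\mathcal{D}_0,\ldots,\mathcal{D}_{N+1}$ is optimal, so is its reflection, and by Theorem \ref{thm:approx_err_equal_ub} both partitions equalise all $N+2$ segmental errors at this common value.

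Third, I would invoke uniqueness of the optimal partition to conclude that it must coincide with its reflection, which is exactly the stated symmetry. The main obstacle is pinning down uniqueness: one must argue, in the style of the perturbation argument used in Theorem \ref{thm:approx_err_equal}, that moving any single breakpoint $b_i$ leaves the slope of segment $i+1$ unchanged but strictly changes $e_i$ and $e_{i+1}$ in opposite directions, so no other configuration of breakpoints can simultaneously realise the same common error level. Granting that strictly monotone dependence, the equal-error system has a unique solution, the reflected partition must equal the original, and the breakpoints are therefore symmetric about $\tilde{\omega}$.
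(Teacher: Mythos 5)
Your proof follows essentially the same route as the paper's, which simply cites Lemma \ref{thm:fol_automorphism} and Theorem \ref{thm:approx_err_equal_ub}: the reflection identity and the equal-error characterisation are exactly the two ingredients the authors have in mind. You are in fact more careful than the paper, since the passage from ``the reflected partition is also optimal'' to ``the optimal partition equals its reflection'' genuinely requires the uniqueness argument you flag, a step the paper's one-line proof leaves entirely implicit.
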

\begin{proof}
This follows from Lemma \ref{thm:fol_automorphism} and Theorem \ref{thm:approx_err_equal_ub}.
\end{proof}
In this case, by exploiting the symmetry of the piecewise linear approximation, an optimal partitioning can be derived by solving a smaller system comprising $\lceil (N+1)/2 \rceil$ equations, where $N+2$ is the number of regions $\mathcal{D}_i$ and $\lceil x \rceil$ rounds $x$ to the next integer value.

As in the case of Jensen's bound, equations in the above system are nonlinear and do not admit a closed form solution in the general case. For sake of completeness, we will briefly discuss next how to derive the system of nonlinear equations for the case of a standard normal random variable $Z$. However, one should note that in practice, by exploiting the properties illustrated in the proof of Theorem \ref{thm:approx_err_equal_ub}, one does not need to solve a new system of nonlinear equations to derive the piecewise linear upper bound. All information needed, i.e. maximum approximation error at breakpoints and locations of the breakpoint, are in fact immediately available as soon as the system of equation presented for the piecewise linear lower bound is solved (Table \ref{tab:parameters}). 

The upper bound presented is closely related to a well-known inequality from stochastic programming, see e.g. \cite{citeulike:695971}, p. 168, \cite{citeulike:12355817}, p. 316, and \cite{citeulike:2516312}, pp. 291-293. As pointed out in \cite{citeulike:695971}, p. 168, Edmundson-Madanski's upper bound can be seen as a bound where the original distribution is replaced by a two point distribution and the problem itself is unchanged, or it can be viewed as a bound where the distribution is left unchanged and the original function is replaced by a linear affine function represented by a straight line. The above discussion clearly demonstrates the dual nature of this upper bound.

\subsection{Normal distribution}

We will next discuss the system of equations that leads to an optimal partitioning for the case of a standard Normal random variable $Z$. This partitioning leads to a piecewise linear approximation that is, in fact, easily extended to the general case of a normally distributed variable $\zeta$ with mean $\mu$ and standard deviation $\sigma$ via Lemma \ref{lem:folf_std_norm}. Also for this second approximation this equation suggests that the error is independent of $\mu$ and proportional to $\sigma$.

Consider a partitioning for the domain of $x$ in $\widehat{\mathcal{L}}(x,\omega)$ into $N+2$ adjacent regions $\mathcal{D}_i=[a_i,b_i]$, where $i=0,\ldots,N+1$. From Theorem \ref{thm:approx_err_symmetric_ub}, if $N$ is odd, then $b_{\lceil N/2\rceil}=0$ and $b_{i}=-b_{N+1-i}$, if $N$ is even, then $b_{i}=-b_{N+1-i}$. Also in this case, we shall use Lemma \ref{thm:compl_fol_closed} for expressing $\widehat{\mathcal{L}}(x,Z)$, and we will exploit the close connections between finding a local minimum and solving a set of nonlinear equations. We will therefore use the Gauss-Newton method to minimize the following sum of squares
\[\sum_{k=1}^{N+1}(e_0-e_k)^2\]
This minimisation problem can be solved by software packages such as Mathematica (see \texttt{NMinimize}). 
%A Mathematica notebook code is provided in Appendix II.

\subsubsection{Numerical examples}

A two-segment piecewise linear upper bound for the complementary first order loss function of a standard Normal random variable $Z$ is shown in Fig. \ref{fig:piecewise-2-ub}. This bound has been obtained, under the minimax criterion previously described, by considering a single breakpoint in the domain, i.e. $x=0$. Of course, the maximum error of this piecewise linear approximation occurs for $x=\pm\infty$ and it is equal to $1/\sqrt{2\pi}$.
\begin{figure}[h!]
\centering
\includegraphics[type=eps,ext=.eps,read=.eps,width=0.8\columnwidth]{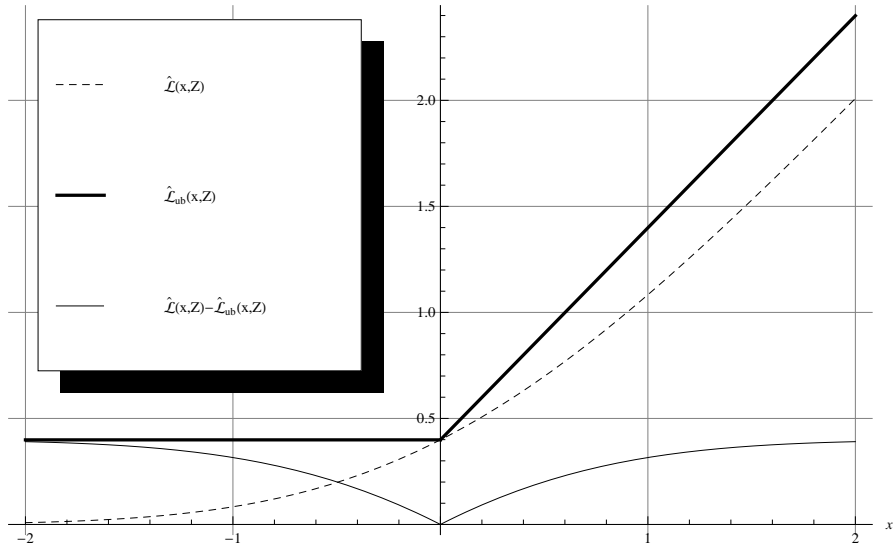}
\caption{two-segment piecewise linear upper bound for $\widehat{\mathcal{L}}(x,Z)$}
\label{fig:piecewise-2-ub}
\end{figure}   
It is easy to observe that this upper bound can be obtained by adding to the classical Jensen's lower bound presented in Fig. \ref{fig:piecewise-2} a constant value equal to its maximum approximation error, i.e. $1/\sqrt{2\pi}$.

%We next present a more interesting case, in which the domain has been split into four regions (Fig. \ref{fig:piecewise-4-ub}). 
%\begin{figure}[h!]
%\centering
%\includegraphics[type=eps,ext=.eps,read=.eps,width=0.8\columnwidth]{piecewise-4-ub}
%\caption{four-segment piecewise linear upper bound for $\widehat{\mathcal{L}}(x,Z)$}
%\label{fig:piecewise-4-ub}
%\end{figure}   
%Breakpoints are positioned at $x\in\{\pm1.18505,0\}$. These were the locations at which the maximum error, i.e. 0.0578441, was observed in Fig. \ref{fig:piecewise-4}. Also in this case, the four-segment piecewise linear upper bound can be obtained by adding to the four-segment piecewise Jensen's lower bound a value equal to its maximum approximation error.

We next present a more interesting case, in which the domain has been split into five regions (Fig. \ref{fig:piecewise-5-ub}). 
\begin{figure}[h!]
\centering
\includegraphics[type=eps,ext=.eps,read=.eps,width=0.8\columnwidth]{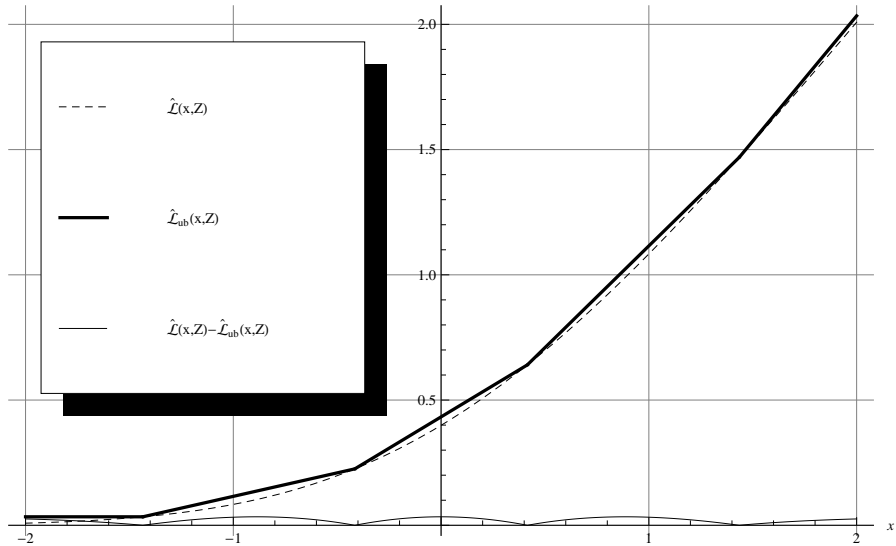}
\caption{five-segment piecewise linear upper bound for $\widehat{\mathcal{L}}(x,Z)$}
\label{fig:piecewise-5-ub}
\end{figure}   
Breakpoints are positioned at $x\in\{\pm1.43535,\pm0.415223\}$. These were the locations at which the maximum error, i.e. 0.0339052, was observed in Fig. \ref{fig:piecewise-5}. Also in this case, the five-segment piecewise linear upper bound can be obtained by adding to the five-segment piecewise Jensen's lower bound a value equal to its maximum approximation error.

Finally, in Fig. \ref{fig:piecewise-5-scaled-ub}, we show an example in which we exploited Lemma \ref{lem:folf_std_norm} to obtain, from the approximation presented in Fig. \ref{fig:piecewise-5-ub}, the five-segment piecewise linear upper bound for $\widehat{\mathcal{L}}(x,\zeta)$, where $\zeta$ is a normally distributed random variable with mean $\mu=20$ and standard deviation $\sigma=5$. The maximum error is $\sigma0.0339052$ and it is observed at $x\in\{\pm\infty,\sigma(\pm0.886942)+\mu, \mu\}$.
\begin{figure}[h!]
\centering
\includegraphics[type=eps,ext=.eps,read=.eps,width=0.8\columnwidth]{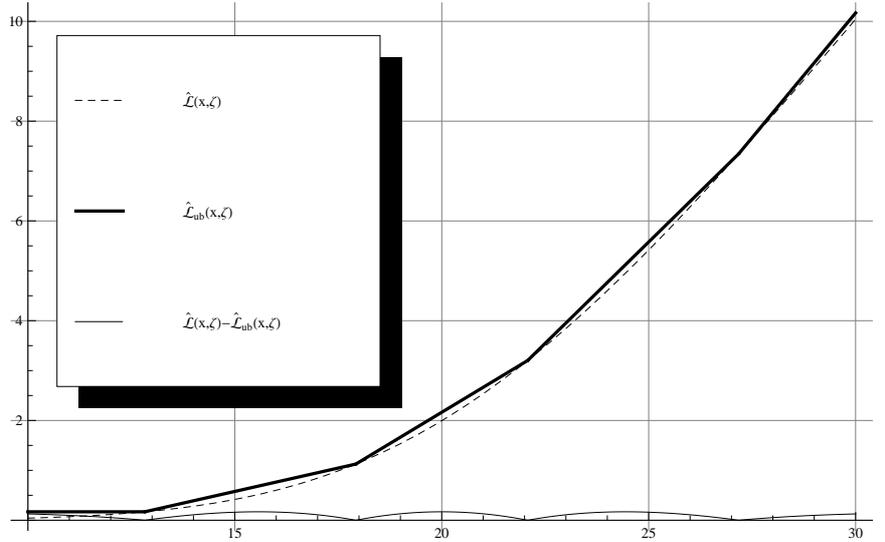}
\caption{five-segment piecewise linear upper bound for $\widehat{\mathcal{L}}(x,\zeta)$, where $\mu=20$ and $\sigma=5$}
\label{fig:piecewise-5-scaled-ub}
\end{figure}   

\section{Conclusions}

We summarised a number of distribution independent results for the first order loss function and its complementary function. We then focused on symmetric distributions and on normal distributions; for these we discussed ad-hoc results. To the best of our knowledge a comprehensive analysis of results concerning the first order loss function seems to be missing in the literature. The first contribution of this work was to fill this gap in the literature. Based on the results discussed, we developed effective piecewise linear approximation strategies based on a minimax framework. This is the second contribution of our work. More specifically, we developed piecewise linear upper and lower bounds for the first order loss function and its complementary function. These bounds rely on constant parameters that are independent of the means and standard deviation of the normal distribution considered. We discussed how to compute optimal parameters that minimise the maximum approximation error and we also provided a table with pre-computed optimal parameters for piecewise bound with up to eleven segments. These bounds can be easily embedded in existing MILP models. 

\section*{Acknowledgements}

INSERT ACKNOWLEDGEMENTS HERE

\bibliographystyle{plain}
\bibliography{piecewise}	

\end{document}